\tikzstyle{dmatrix}=[matrix of math nodes,row sep=2.5em, column sep=2.5em,
\tikzstyle{smatrix}=[matrix of math nodes,row sep=1.5em, column sep=1.5em,
\tikzstyle{dmatrix}=[matrix of math nodes,row sep=2.5em, column sep=2.5em,
\numberwithin{equation}{section}
\DeclareMathOperator{\Bl}{Bl}     
\DeclareMathOperator{\Pic}{Pic}
\DeclareMathOperator{\Sym}{Sym}
\DeclareMathOperator{\rk}{rk}
\newcommand{\RelSpec}{\mathop{\mathcal{S}{\mathit{pec}}}\nolimits}
\newcommand{\CC} {\mathbb{C}}
\newcommand{\ZZ} {\mathbb{Z}}
\newcommand{\PP}{\mathbb{P}}
\renewcommand{\P} {\mathbb{P}}
\newcommand{\normaliz}[1]{\widetilde{#1}}
\newcommand{\DegenMorph}{\phi}
\newtheorem{proposition}{Proposition}[section]
\newtheorem{theorem}[proposition]{Theorem}
\newtheorem*{theorem*}{Theorem}
\newtheorem{corollary}[proposition]{Corollary}
\newtheorem*{corollary*}{Corollary}
\newtheorem*{conjecture*}{Conjecture}
\newtheorem{lemma}[proposition]{Lemma}
\newtheorem*{lemma*}{Lemma}
\theoremstyle{definition}
\newtheorem{definition}[proposition]{Definition}
\newtheorem*{definition*}{Definition}
\newtheorem*{example*}{Example}
\newtheorem{remark}[proposition]{Remark}
\newtheorem*{remark*}{Remark}
\newtheorem*{question*}{Question}
\newtheorem*{result*}{Result}
\newcommand{\MgSymbol}{\mathcal{M}}
\newcommand{\MgStackSymbol}{\mathsf{M}}
\newcommand{\RgSymbol}{\mathcal{R}}
\newcommand{\RgStackSymbol}{\mathsf{R}}
\newcommand{\AgSymbol}{\mathcal{A}}
\newcommand{\UnivGrdSymbol}{\mathfrak{G}}
\newcommand{\UnivCrdSymbol}{\mathfrak{C}}
\newcommand{\Mg}[1]{\MgSymbol_{#1}}
\newcommand{\Ag}[1]{\AgSymbol_{#1}}
\newcommand{\Rg}[1]{\RgSymbol_{#1}}
\newcommand{\MgStack}[1]{\MgStackSymbol_{#1}}
\newcommand{\RgStack}[1]{\RgStackSymbol_{#1}}
\newcommand{\MgBar}[1]{\overline{\MgSymbol}_{#1}}
\newcommand{\RgBar}[1]{\overline{\RgSymbol}_{#1}}
\newcommand{\RgBarStack}[1]{\overline{\RgStackSymbol}_{#1}}
\newcommand{\BoundaryPCIrr}{\widetilde{\Delta}_0}
\newcommand{\BoundaryPCZero}{\Delta^0_0}
\newcommand{\MgBarPCIrr}[1]{\widetilde{\MgSymbol}_{#1}}
\newcommand{\MgBarStackPCIrr}[1]{\widetilde{\MgStackSymbol}_{#1}}
\newcommand{\MgPCZero}[1]{\MgSymbol^0_{#1}}
\newcommand{\MgStackPCZero}[1]{\MgStackSymbol^0_{#1}}
\newcommand{\MgBarStackPCZero}[1]{\overline{\MgStackSymbol}^0_{#1}}
\newcommand{\RgBarPCIrr}[1]{\widetilde{\RgSymbol}_{#1}}
\newcommand{\RgBarStackPCIrr}[1]{\widetilde{\RgStackSymbol}_{#1}}
\newcommand{\RgBarPCZero}[1]{\overline{\RgSymbol}^0_{#1}}
\newcommand{\RgBarStackPCZero}[1]{\overline{\RgStackSymbol}^0_{#1}}
\newcommand{\RgBarDesing}[1]{\widehat{\RgSymbol}_{#1}}
\newcommand{\RgBarReg}[1]{\RgBar{#1}^{\mathrm{reg}}}
\newcommand{\UnivPrymCurveNaive}{\mathcal{Z}}
\newcommand{\UnivPrymCurve}[1]{\mathcal{X}_{#1}}
\newcommand{\Dram}{\Delta_0^{\mathrm{ram}}}
\newcommand{\dram}{\delta_0^{\mathrm{ram}}}
\newcommand{\dprimeboundary}{( \delta_0' + \delta_0'' )}
\newcommand{\UnivGrd}[2]{\UnivGrdSymbol^{#1}_{#2}}
\newcommand{\UnivGrdPCZero}[2]{\overline{\UnivGrdSymbol}^{#1}_{#2}}
\newcommand{\UnivGrdTors}[2]{\UnivGrdSymbol^{#1,(2)}_{#2}}
\newcommand{\UnivGrdTorsPCZero}[2]{\overline{\UnivGrdSymbol}^{#1,(2)}_{#2}}
\newcommand{\UnivCrdPCZero}[2]{\overline{\UnivCrdSymbol}^{#1}_{#2}}
\newcommand{\UnivCrdTorsPCZero}[2]{\overline{\UnivCrdSymbol}^{#1,(2)}_{#2}}
\newcommand{\GrdSuit}[2]{\UnivGrdTorsPCZero{#1}{#2}}
\newcommand{\CrdSuit}[2]{\UnivCrdTorsPCZero{#1}{#2}}
\newcommand{\RgSuit}[1]{\RgBarPCZero{#1}}
\newcommand{\PrymMap}[1]{\mathcal{P}_{#1}}
\newcommand{\PrymPoincare}{\mathscr{P}}
\newcommand{\concreteF}{\UnivCrdTorsMorph_\ast ( \Poincarebundle^{\otimes 2} )}
\newcommand{\degenmorph}{\phi} 
\newcommand{\Poincarebundle}{\mathscr{L}}
\newcommand{\RgForgetfulMap}{\pi}
\newcommand{\RgStackForgetfulMap}{\pi} 
\newcommand{\UnivPrymCurveMorph}{f}
\newcommand{\UnivCrdMorph}[2]{f^{#1}_{#2}}
\newcommand{\UnivCrdTorsMorph}{\chi}
\newcommand{\UnivGrdTorsMorph}{\sigma}
\newcommand{\Dg}[1]{\mathcal{D}_{#1}}
\newcommand{\Dgbar}[1]{\overline{\mathcal{D}}_{#1}}
\newcommand{\MyDivisor}{\Dg{15}}
\newcommand{\MyDivisorBar}{\Dgbar{15}}
\newcommand{\Addresses}{{
  \bigskip
  \footnotesize
  \textsc{Humboldt-Universität zu Berlin, Institut für Mathematik, Unter den Linden 6}\par
  10099 \textsc{Berlin, Germany}\par\nopagebreak
  \textit{E-mail address}:  \texttt{gregor.bruns@hu-berlin.de}




}}
\title{$\bm{\RgBar{15}}$ is of general type}
\author{Gregor Bruns}
\date{}
\begin{document}

\maketitle

\begin{abstract}
  We prove that the moduli space $\RgBar{15}$ of Prym curves of genus $15$ is of general type.
  To this end we exhibit a virtual divisor $\MyDivisorBar$ on $\RgBar{15}$ as the degeneracy locus of a
  globalized multiplication map of sections of line bundles.  We then proceed to show that this locus is
  indeed of codimension one and calculate its class.  Using this class, we can conclude that $K_{\RgBar{15}}$ is big.
  This complements a 2010 result of Farkas and Ludwig:  now the spaces $\RgBar{g}$ are known to be of
  general type for $g \geq 14$.
\end{abstract}

\section{Introduction}

The study of Prym varieties has a long history, going back to work of Riemann, Wir\-tin\-ger, Schottky and Jung
in the late \nth{19} and early \nth{20} century.  Of particular interest is the correspondence between moduli
of étale double covers of curves of genus $g$ and abelian varieties of dimension $g-1$, given by the Prym map
$\PrymMap{g}\colon \Rg{g} \rightarrow \Ag{g-1}$.
Here we denote by $\Rg{g}$ the moduli space of pairs $[C,\eta]$ where $[C]\in \Mg{g}$ is a
smooth genus $g$ curve and $\eta \in \Pic^0(C)$ is a $2$-torsion point (or equivalently
an étale double cover of $C$).

It turns out that every principally polarized abelian variety (ppav) up to dimension $5$ is a Prym variety.
This generalizes the well-known fact that the general ppav of dimension at most $3$ is the Jacobian of a curve.
In dimension greater than $5$, Prym varieties are no longer dense in the moduli space of ppavs,
but their locus is still of geometric interest.

It is natural to ask for a modular compactification of $\Rg{g}$ in order to study
degenerations of Prym varieties and the birational geometry of their families.
Explicit constructions were put forward in \autocite{Beau77,Ber99} and in \autocite{BCF}
where the compactification is given in terms of admissible covers and Prym curves, respectively.

Much is already known  about the birational geometry of $\Rg{g}$.
It is a rational variety for $g \leq 4$, unirational for $g \leq 7$ and uniruled for $g \leq 8$
(see \autocite{FV2016} for a discussion).  The availability of a modular compactification
has sparked interest in the Kodaira dimension of $\RgBar{g}$ for higher $g$.
Farkas and Ludwig \autocite{FL} prove that $\RgBar{g}$ is of general type
for $g \geq 14$ and $g \not= 15$.  The Kodaira dimension of $\RgBar{12}$ is shown to be nonnegative.

In this note we close the gap at $g = 15$.
\begin{theorem}
  The moduli space $\RgBar{15}$ is of general type.
\end{theorem}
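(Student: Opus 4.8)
The plan is to run the standard program for proving a moduli space of curves is of general type: express $K_{\RgBar{15}}$ as a $\QQ_{\geq 0}$-combination of an effective divisor with as small a Hodge coefficient as possible, a big class, and the boundary. First I would reduce to bigness of the canonical class: by the singularity analysis of Farkas--Ludwig (building on Harris--Mumford and on Ludwig's description of the singularities of $\RgBar{g}$), for $g\geq 4$ every pluricanonical form on $\RgBarReg{g}$ extends to any resolution of singularities, so it suffices to prove that $K_{\RgBar{15}}$ is big. I would then record the canonical class in the standard generators $\lambda$, $\delta_0'$, $\delta_0''$, $\dram$, $\delta_i$, $\delta_{g-i}$, $\delta_{i:g-i}$ of $\Pic(\RgBar{15})_\QQ$; schematically, using $K_{\RgBar{15}} = \RgForgetfulMap^\ast K_{\MgBar{15}} + \Dram$ (since $\RgForgetfulMap$ is ramified of order two along $\Dram$),
\[
K_{\RgBar{15}} = 13\lambda - 2\dprimeboundary - 3\dram - (\text{effective combination of }\delta_i,\ \delta_{g-i},\ \delta_{i:g-i}).
\]
The whole difficulty is that $13\lambda$ alone is not big on $\RgBar{15}$, so the effective divisor one feeds in must have Hodge coefficient comfortably below the relevant threshold.

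Next comes the construction of the virtual divisor $\MyDivisorBar$. Over $\Rg{15}$ one builds a Grassmann bundle $\UnivGrdSymbol$ parametrizing the relevant Mukai-type linear data on the fibre curve, carrying a tautological rank-$4$ subbundle $\TautBundle$; the geometric input is the realization of the general curve $C$ inside $\PP^7$ with $\MyNormalBundle \cong \wedgesc$. On $\UnivGrdSymbol$ one has two vector bundles $\mathcal E$ and $\mathcal F$ of equal rank together with a morphism $\phi\colon\mathcal E\to\mathcal F$ globalizing a multiplication map of sections of line bundles on the fibre curve, and one sets $\MyDivisorBar$ to be the image in (and closure in) $\RgBar{15}$ of the degeneracy locus $\{\det\phi=0\}\subset\UnivGrdSymbol$, carrying the virtual class $c_1(\mathcal F)-c_1(\mathcal E)$.

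Two tasks remain, and the first is the heart of the matter. (1) One must show $\codim_{\RgBar{15}}\MyDivisorBar=1$, i.e.\ that $\phi$ has maximal rank generically; in practice this means exhibiting a single Prym curve — typically a well-chosen degeneration, e.g.\ a nodal curve on a special surface, or a suitable boundary point, where the relevant multiplication map can be verified to be nondegenerate, possibly by an explicit machine-assisted rank computation — and concluding by semicontinuity. (2) One computes $[\MyDivisorBar] = a\lambda - b_0'\delta_0' - b_0''\delta_0'' - b_0^{\mathrm{ram}}\dram - \cdots$ by applying Grothendieck--Riemann--Roch to the universal curve over $\UnivGrdSymbol$ to obtain the Chern classes of $\mathcal E$ and $\mathcal F$, pushing the class forward to $\RgBar{15}$ with due account of the degree of the family of linear series, and determining the boundary coefficients by a test-curve / limit-linear-series analysis at $\delta_0'$, $\delta_0''$, $\dram$ and the $\delta_i$'s; one wants $a$ small relative to the $b$'s.

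Finally one assembles everything. Combining the formula for $K_{\RgBar{15}}$, the class $[\MyDivisorBar]$, a small multiple of $\lambda$ — which is big on $\RgBar{15}$, essentially because $\PrymMap{15}$ is generically finite onto its image in $\Ag{14}$ and the Hodge class on $\Ag{14}$ is ample on the Baily--Borel compactification — and an effective combination of the remaining boundary divisors, the goal is to write
\[
K_{\RgBar{15}} = \alpha\,[\MyDivisorBar] + \varepsilon\,\lambda + (\text{effective boundary}),\qquad \alpha,\ \varepsilon\in\QQ_{>0},
\]
whence $K_{\RgBar{15}}$ is big and $\RgBar{15}$ is of general type. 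That this numerical assembly succeeds for $g=15$ is precisely what fails if one uses only the Brill--Noether or Koszul divisors available in the Farkas--Ludwig argument for the other $g\geq 14$ — hence the need for the more efficient $\MyDivisorBar$. The main obstacle is step (1): writing $\MyDivisorBar$ as a degeneracy locus is formal, but proving it is a genuine divisor rather than all of $\RgBar{15}$ demands a genuinely new explicit example; the boundary-coefficient computation in (2), with its sign and multiplicity bookkeeping at $\dram$, $\delta_0'$, $\delta_0''$, is the longest but essentially routine step.
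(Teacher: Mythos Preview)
Your outline follows the right general program, but there are two concrete gaps.

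First, your description of $\MyDivisorBar$ is not the construction actually used and, as written, does not make sense in genus $15$. You invoke a Grassmann bundle with a tautological rank-$4$ subbundle, an embedding of $C$ in $\PP^7$, and $\MyNormalBundle\cong\wedgesc$ --- this is the Mukai picture for other genera (e.g.\ $g=8$), not for $g=15$. The paper's divisor is defined instead via the multiplication map
\[
\Sym^2 H^0(C,L)\oplus\Sym^2 H^0(C,L\otimes\eta)\longrightarrow H^0(C,L^{\otimes 2})
\]
for $L\in W^4_{16}(C)$, a map between $18$-dimensional spaces that genuinely uses the Prym datum $\eta$. For the divisoriality step the paper neither degenerates nor computes by machine: it takes a smooth curve carrying a theta-characteristic $\vartheta$ with $h^0(\vartheta)=5$ (a hyperplane section of a canonical surface in $\PP^5$), sets $L=\vartheta(2x)$, and checks the map directly. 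Crucially, this example is not Petri general, so semicontinuity alone is insufficient; one also needs irreducibility of the parameter space $\UnivGrdTors{4}{16}$ of triples $[C,\eta,L]$, which the paper establishes separately via $k$-gonal curves and irreducibility of Severi varieties. Your proposal omits this step.

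Second, and numerically decisive: $\MyDivisorBar$ alone does \emph{not} suffice. Its $\lambda/\delta_0'$-ratio is $3127/470>13/2$, so one cannot write $K_{\RgBar{15}}=\alpha\,[\MyDivisorBar]+\varepsilon\,\lambda+(\text{effective boundary})$ with $\varepsilon>0$. The paper must combine $\MyDivisorBar$ with the Farkas--Ludwig divisor $\overline{\mathcal{D}}_{15:2}$ (which by itself fails the $\dram$ inequality, having $\lambda/\dram$-ratio $88/15>13/3$); the two divisors compensate for each other's deficiencies, and only a suitable positive combination yields $\lambda$-coefficient $10288/793<13$ against boundary coefficients $2,2,3$. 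Your final assembly step would not close as stated.
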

We briefly outline the strategy of the proof.
In order to show that the canonical class of $\RgBar{15}$
is big, we construct an effective divisor $\MyDivisor$ such that $K_{\RgBar{15}}$
can be written as a positive linear combination of the Hodge class, the class of $\MyDivisorBar$
and other effective divisor classes.

To motivate the construction of $\MyDivisor$, consider first the case of genus $6$.
A general curve $[C] \in \Mg{6}$ possesses a finite number of complete $\mathfrak{g}^2_6$.
Any such $L \in W^2_6(C)$ induces a birational map to a plane sextic curve $\Gamma$ with $4$ nodes.
If there is a plane conic $Q$ totally tangent to $\Gamma$, i.e.,
$Q \cdot \Gamma = 2D$ where $D$ is effective of degree $6$, then
$\eta = \mathcal{O}_\Gamma(-1) \otimes \mathcal{O}_\Gamma(D)$ is $2$-torsion.
\begin{center}
  \begin{tikzpicture}
    \node[inner sep=0] at (0,0) { \includegraphics[scale=0.5]{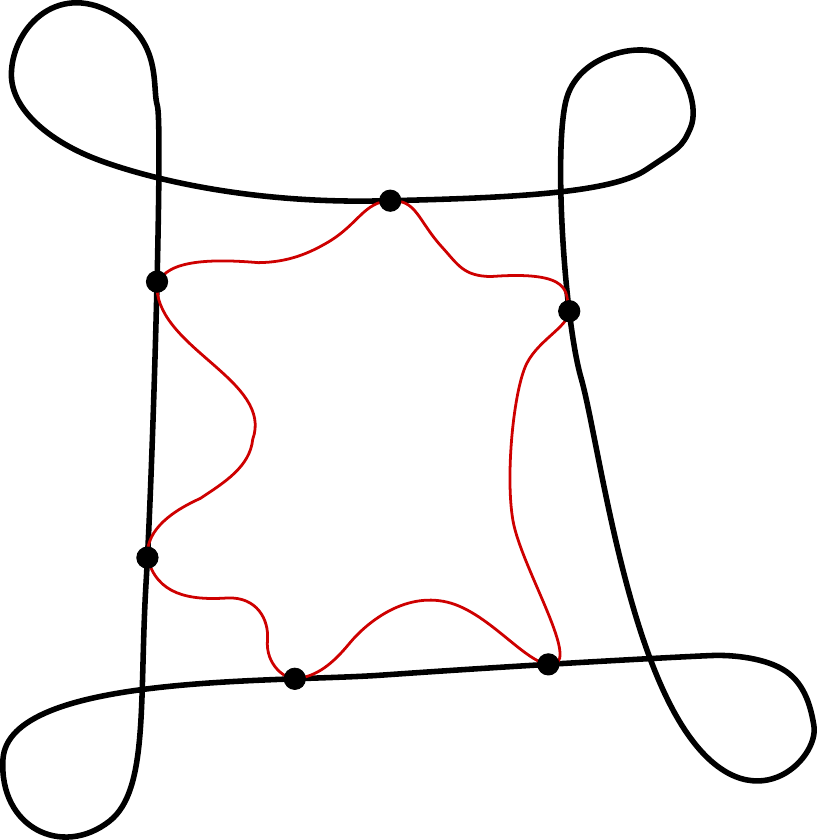} };
    \node at (-0.55,-0.1) { $Q$ };
    \node at (1.7,1.4) { $\Gamma$ };
  \end{tikzpicture}
\end{center}
The existence of such a totally tangent conic is equivalent to the failure of the map
\begin{equation*}
  \Sym^2 H^0(C, L \otimes \eta) \rightarrow \frac{H^0(C,L^{\otimes 2})}{\Sym^2 H^0(C,L)}
\end{equation*}
to be injective.  It turns out that the closure of the locus of pairs $[C,\eta] \in \Rg{6}$
where this injectivity fails is a divisor, i.e., the condition to possess a totally tangent conic
to a plane sextic model gives a divisorial condition on $\Rg{6}$.  This divisor can also
be identified with the closure of the ramification divisor of the Prym map $\Rg{6} \rightarrow \Ag{5}$.
For details, see \autocite{FGSV}.

We generalize this condition and adapt it to genus $15$.  A general genus $15$ curve $C$
carries a finite number of complete $\mathfrak{g}^4_{16}$ linear series.
For any such $L \in W^4_{16}(C)$ we can consider the multiplication map
\begin{equation*}
  \mu_{[C,L]} \colon \Sym^2 H^0(C,L) \rightarrow H^0(C, L^{\otimes 2}).
\end{equation*}
The vector spaces on the left- and right-hand sides are of dimensions $15$ and $18$, respectively,
and the map is injective for the general pair $[C,L]$.  We can make use of a torsion bundle $\eta$
to get the remaining three sections:
\begin{equation}
  \label{eq:multiplication-map-intro}
  \mu_{[C,\eta,L]}\colon \Sym^2 H^0(C, L) \oplus \Sym^2 H^0(C, L\otimes \eta) \rightarrow H^0(C, L^{\otimes 2}).
\end{equation}
We consider the locus of Prym curves carrying a $\mathfrak{g}^4_{16}$ such that this map
fails to be an isomorphism.  Unlike in genus $6$, such curves are not directly characterized
by having a totally tangent quadric hypersurface, although on those that have, the map
\eqref{eq:multiplication-map-intro} certainly fails to be injective.

It turns out that $\mu_{[C,\eta,L]}$ is bijective
for all $L$ on the general pair $[C,\eta]\in \Rg{15}$ and the failure locus is in codimension one.
We may therefore consider the divisor
\begin{equation*}
  \MyDivisor = \left\{ [C, \eta] \in \Rg{15} ~\left|~
  \exists L\in W^4_{16}(C) \text{ such that } \mu_{[C,\eta,L]}\text{ is not an isomorphism} \right.\right\}.
\end{equation*}
In order to show that \eqref{eq:multiplication-map-intro} is indeed bijective for all $\eta$ and $L$
on a general curve $C$, we first construct in Section \ref{subsec:curve-example} a single example, using a curve
that carries a theta characteristic with a large number of sections.  Afterwards we
prove that the moduli space $\UnivGrdTors{4}{16}$ of triples $[C,\eta,L]$ is irreducible,
allowing us to specialize the general triple to the constructed example.  More generally, we
obtain the following result:
\begin{proposition}
  Assume $g\geq 3$ and the Brill--Noether number $\rho(g,r,d) = 0$.  If either $r \leq 2$ 
  or $g - d + r - 1 \leq 2$ then $\UnivGrdTors{r}{d}$ is irreducible.
\end{proposition}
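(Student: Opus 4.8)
The plan is to write $\UnivGrdTors{r}{d}$ as the fibre product $\UnivGrd{r}{d}\times_{\Mg{g}}\Rg{g}$, where $\UnivGrd{r}{d}$ is the universal parameter variety of pairs $[C,L]$ with $L\in W^r_d(C)$, and to reduce its irreducibility to that of $\UnivGrd{r}{d}$ together with a $2$-torsion monodromy statement that will follow from a numerical estimate. First, by Serre duality the assignment $L\mapsto\SerreDual{L}$ gives an isomorphism $\UnivGrd{r}{d}\cong\UnivGrd{g-d+r-1}{2g-2-d}$ over $\Mg{g}$, compatible with adjoining a torsion point, hence $\UnivGrdTors{r}{d}\cong\UnivGrdTors{g-d+r-1}{2g-2-d}$; since $\rho$ is unchanged and the invariants $r$ and $g-d+r-1$ are interchanged, I may assume $r\le 2$. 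The case $r=0$ is vacuous ($\rho=0$ then forces $d=0$ and $L=\mathcal{O}_C$), so take $r\in\{1,2\}$.

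The key input is the irreducibility of $\UnivGrd{r}{d}$. Because $\rho(g,r,d)=0\ge 0$, the general curve carries a $\mathfrak{g}^r_d$ and $\UnivGrd{r}{d}$ dominates $\Mg{g}$; a general complete $\mathfrak{g}^r_d$ on a general curve is base-point-free and, for $r=1$, defines a simply branched covering $C\to\P^1$, while for $r=2$ it maps $C$ birationally onto a plane curve of degree $d$ with exactly $\binom{d-1}{2}-g$ nodes. A dimension count based on the existence half of the Brill--Noether theorem excludes components of $\UnivGrd{r}{d}$ over proper subvarieties of $\Mg{g}$ in this range, so these generic models describe $\UnivGrd{r}{d}$ up to birational equivalence. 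Hence $\UnivGrd{1}{d}$ is birational to the Hurwitz space of simply branched degree-$d$ genus-$g$ covers of $\P^1$, which is irreducible by the classical theorem of Clebsch, Hurwitz and Fulton, and $\UnivGrd{2}{d}$ is birational to the quotient by $\PGL_3$ of the corresponding Severi variety, which is irreducible by Harris's theorem. In both cases $\UnivGrd{r}{d}$ is irreducible, and the projection $\UnivGrd{r}{d}\to\Mg{g}$ is generically finite of degree equal to the Castelnuovo number $N=g!\prod_{i=0}^{r}\tfrac{i!}{(g-d+r+i)!}$, with reduced general fibre by Gieseker's smoothness theorem. Consequently over a dense open $V\subseteq\Mg{g}$ the pullback of $\UnivGrd{r}{d}$ is a connected finite étale cover of degree $N$, so the image $H$ of the fundamental group of the smooth locus of $\UnivGrd{r}{d}$ in the mapping class group $\pi_1(\Mg{g})$ has index at most $N$.

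Finally, the $2$-torsion monodromy. Since $\UnivGrdTors{r}{d}=\UnivGrd{r}{d}\times_{\Mg{g}}\Rg{g}$ and $\UnivGrd{r}{d}$ is irreducible, the irreducibility of $\UnivGrdTors{r}{d}$ is equivalent to the transitivity, on $\Pic^0(C)[2]\smallsetminus\{0\}$, of the image $\bar H$ of $H$ in $\mathrm{Sp}_{2g}(\F_2)=\mathrm{Sp}\bigl(\Pic^0(C)[2]\bigr)$. As $\pi_1(\Mg{g})\twoheadrightarrow\mathrm{Sp}_{2g}(\F_2)$, the index of $\bar H$ in $\mathrm{Sp}_{2g}(\F_2)$ is at most $[\pi_1(\Mg{g}):H]\le N$. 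For $g\ge 3$ the group $\mathrm{Sp}_{2g}(\F_2)$ is simple and each of its proper subgroups has index at least $2^{g-1}(2^g-1)$; a direct estimate of the Castelnuovo number shows $N<2^{g-1}(2^g-1)$ throughout the range at hand. Hence $\bar H=\mathrm{Sp}_{2g}(\F_2)$, which acts transitively on nonzero vectors, and $\UnivGrdTors{r}{d}$ is irreducible. I expect the least routine part of this argument to be the identification of $\UnivGrd{r}{d}$, up to birational equivalence, with the Hurwitz space (for $r=1$) and with a Severi variety modulo $\PGL_3$ (for $r=2$) — matching generic linear series with their branched-cover or nodal plane models and excluding spurious components via Brill--Noether dimension estimates; it is exactly this step that forces the hypothesis $r\le 2$ (equivalently $g-d+r-1\le 2$), since outside this range the irreducibility of $\UnivGrd{r}{d}$ for $\rho=0$ is not known. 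The $2$-torsion monodromy, by contrast, costs nothing beyond the numerical comparison above.
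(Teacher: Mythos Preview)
Your argument is correct, but it follows a genuinely different route from the paper's.

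Both proofs begin the same way: reduce to $r\le 2$ by Serre duality, dispose of $r=0$, and (for $r=2$) invoke Harris' irreducibility of the Severi variety to conclude that $\UnivGrd{2}{d}$ is irreducible. From there the arguments diverge. The paper does \emph{not} attack the $2$-torsion monodromy directly. Instead it first proves a separate statement (its Proposition~3.2): using the irreducibility of the Prym--Hurwitz space $\UnivGrdTors{1}{k}$ due to Biggers--Fried and the rational map $[D,\eta,A]\mapsto [D,\eta,A^{\otimes r}]$, it shows that the Petri-general locus of $\UnivGrdTors{r}{d}$ is connected for every $r$. Then, for $r=2$, since the étale cover $\UnivGrdTors{2}{d}\to\UnivGrd{2}{d}$ forces every component to have dimension $3g-3$ and hence a generically injective Petri map, all components meet that single Petri locus and there is only one.

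Your approach replaces the Petri/bootstrap argument by a purely group-theoretic bound: the connected degree-$N$ cover $\UnivGrd{r}{d}\to\Mg{g}$ forces the image of $\pi_1$ in $\mathrm{Sp}_{2g}(\F_2)$ to have index at most the Castelnuovo number $N$, and since for $g\ge 3$ the simple group $\mathrm{Sp}_{2g}(\F_2)$ has no proper subgroup of index below $2^{g-1}(2^g-1)$ (Cooperstein), the estimate $N<2^{g-1}(2^g-1)$ yields full monodromy and hence transitivity on nonzero $2$-torsion. This is slick and treats $r=1$ and $r=2$ uniformly; it also avoids verifying that $A^{\otimes r}$ is Petri general. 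On the other hand, it imports a nontrivial fact about minimal permutation degrees of finite classical groups, whereas the paper's method yields as a byproduct the stronger Proposition~3.2 (valid for all $r$ with $\rho=0$), which the paper needs anyway to justify specializing to a non--Petri-general example. Note also that the ``dimension count'' you allude to---ruling out components of $\UnivGrd{r}{d}$ whose general $\mathfrak{g}^r_d$ is not base-point-free/birational onto a nodal image---is the same step the paper takes for granted when asserting that the Severi map is dominant; it is standard but not entirely content-free.
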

Taking the closure $\MyDivisorBar$ of $\MyDivisor$ in an appropriate partial compactification
$\RgSuit{15}$ of $\Rg{15}$,
we can calculate the class of $\MyDivisorBar$ using a determinantal description
coming from globalizing the map \eqref{eq:multiplication-map-intro} to a morphism
of vector bundles.
\begin{theorem}
  The locus $\MyDivisorBar$ is a divisor in $\RgSuit{15}$ and we have the expression
  \begin{equation*}
    [\MyDivisorBar] + E \equiv
    31020 \left( \frac{3127}{470} \lambda - (\delta_0' + \delta_0'') - \frac{3487}{1880} \dram \right)
  \end{equation*}
  where $E$ is an effective class on $\RgSuit{15}$.
\end{theorem}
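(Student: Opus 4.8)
The plan is to realise $\MyDivisorBar$ as the image of the degeneracy locus of a morphism of vector bundles of equal rank, and then to compute its class by Grothendieck--Riemann--Roch, pushing the answer forward along the map that forgets the linear series. Write $\mathfrak{M}$ for the partial compactification $\UnivGrdTorsPCZero{4}{16}$ of the space of triples $[C,\eta,L]$, write $f$ for the universal curve over $\mathfrak{M}$, $\mathcal{L}$ for the universal line bundle of relative degree $16$ and $\eta$ for the universal point of order two, and let $\sigma\colon\mathfrak{M}\to\RgSuit{15}$ be the forgetful map, which is generically finite of degree equal to the number of complete $\mathfrak{g}^4_{16}$'s on a general curve of genus $15$.

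First I would globalise the map \eqref{eq:multiplication-map-intro}. Since $\eta^{\otimes2}\cong\mathcal{O}_C$, multiplication of sections gives, fibrewise, maps $\Sym^2 H^0(C,L)\to H^0(C,L^{\otimes2})$ and $\Sym^2 H^0(C,L\otimes\eta)\to H^0\bigl(C,(L\otimes\eta)^{\otimes2}\bigr)=H^0(C,L^{\otimes2})$ whose sum is $\mu_{[C,\eta,L]}$ --- equivalently, $\mu_{[C,\eta,L]}$ is the invariant part, under the covering involution, of the multiplication map on the étale double cover of $C$. Globalising, and extending over the boundary of the Prym compactification --- where $f_\ast\mathcal{L}$, $f_\ast(\mathcal{L}\otimes\eta)$ and $f_\ast\mathcal{L}^{\otimes2}$ are replaced by the appropriate locally free sheaves built from torsion-free sheaves on the nodal Prym curves --- this yields a morphism $\DegenMorph\colon\mathcal{A}\to\mathcal{B}$ of rank-$18$ vector bundles on $\mathfrak{M}$. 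By the explicit example of Section~\ref{subsec:curve-example} together with the irreducibility of $\UnivGrdTors{4}{16}$, the morphism $\DegenMorph$ is generically an isomorphism, so $\det\DegenMorph$ is a nonzero section of the line bundle $\det\mathcal{B}\otimes(\det\mathcal{A})^{\vee}$, and its zero scheme $\widehat{\mathcal{Z}}$ is an effective divisor on $\mathfrak{M}$ with $[\widehat{\mathcal{Z}}]=c_1(\mathcal{B})-c_1(\mathcal{A})$ and $\sigma(\widehat{\mathcal{Z}})=\MyDivisor$. That $\MyDivisorBar$ is genuinely a divisor --- i.e.\ that $\MyDivisor$ has codimension one --- follows once the computation below exhibits $\sigma_\ast[\widehat{\mathcal{Z}}]$ as a nonzero class.

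Next I would compute $c_1(\mathcal{A})$ and $c_1(\mathcal{B})$ by Grothendieck--Riemann--Roch applied to $f$ (equivalently, to the universal double cover). This expresses $\ch(f_!\mathcal{L})$, $\ch\bigl(f_!(\mathcal{L}\otimes\eta)\bigr)$ and $\ch(f_!\mathcal{L}^{\otimes2})$ --- together with the corrections coming from the higher direct images, which are supported on the boundary and on the locus where $h^0(L\otimes\eta)$ jumps, the latter of codimension at least two and hence irrelevant --- in terms of the Hodge class $\lambda$, the boundary classes $\delta_0'$, $\delta_0''$, $\dram$ and the tautological classes $\mathfrak{a}=f_\ast\bigl(c_1(\mathcal{L})^2\bigr)$ and $\mathfrak{b}=f_\ast\bigl(c_1(\mathcal{L})\,c_1(\omega_f)\bigr)$ on $\mathfrak{M}$; the splitting principle (in the form $c_1(\Sym^2 V)=(\rk V+1)c_1(V)$) then turns the two symmetric squares into an explicit $\QQ$-linear combination of $\lambda,\delta_0',\delta_0'',\dram,\mathfrak{a},\mathfrak{b}$ for $c_1(\mathcal{B})-c_1(\mathcal{A})$. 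I would then push this class forward by $\sigma$, using that, since $\rho(15,4,16)=0$, the pushforwards $\sigma_\ast1$, $\sigma_\ast\mathfrak{a}$ and $\sigma_\ast\mathfrak{b}$ are given by the standard formulas for $\rho=0$ moduli of linear series --- going back to Harris--Mumford and Eisenbud--Harris, and worked out in the analogous spin- and Prym-curve settings by Farkas and collaborators --- each being an explicit combination of $\lambda,\delta_0',\delta_0'',\dram$. As $\Pic(\RgSuit{15})_{\QQ}$ is spanned by these four classes, one obtains $\sigma_\ast[\widehat{\mathcal{Z}}]=[\MyDivisorBar]+E$, where $E$ is the pushforward of those components of $\widehat{\mathcal{Z}}$ dominating a boundary divisor of $\RgSuit{15}$ --- such components occur systematically because $\DegenMorph$ drops rank on nodal curves --- and is in particular effective. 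Clearing denominators in the resulting expression yields the factor $31020$ and the asserted identity.

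The step I expect to be the main obstacle is the combined Grothendieck--Riemann--Roch and pushforward computation over the boundary: one must fix the correct extension of $\mathcal{A}$ and $\mathcal{B}$ across the boundary of $\RgSuit{15}$ using torsion-free sheaves on the nodal Prym curves, track carefully how $\omega_f$, $\mathcal{L}$ and $\eta$ interact over $\dram$, and determine precisely which components of $\widehat{\mathcal{Z}}$ are boundary components to be absorbed into $E$ and which belong to $\MyDivisorBar$ itself. Pinning down the coefficient of $\dram$, in particular, will require a careful local analysis near the ramification boundary.
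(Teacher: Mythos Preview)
Your strategy is essentially the paper's: globalise $\mu_{[C,\eta,L]}$ to a morphism $\DegenMorph$ of rank-$18$ bundles over $\GrdSuit{4}{16}$, compute $c_1$ of source and target by Grothendieck--Riemann--Roch, push forward by $\UnivGrdTorsMorph$ using the $\rho=0$ formulas, and absorb the boundary excess into an effective correction $E$. The divisoriality argument via the explicit half-canonical curve and irreducibility of $\UnivGrdTors{4}{16}$ is exactly what the paper does.

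There is, however, one genuine gap. You claim that GRR expresses $c_1(\mathcal{B})-c_1(\mathcal{A})$ as a $\QQ$-linear combination of $\lambda$, the boundary classes, and the two tautological classes $\mathfrak{a}=f_\ast c_1(\mathcal{L})^2$ and $\mathfrak{b}=f_\ast\bigl(c_1(\mathcal{L})\,c_1(\omega_f)\bigr)$. This is not so: GRR applied to $\mathcal{L}$ only computes $\ch_1(f_!\mathcal{L})=c_1(f_\ast\mathcal{L})-c_1(R^1 f_\ast\mathcal{L})$, and since here both $f_\ast\mathcal{L}$ and $R^1 f_\ast\mathcal{L}$ are locally free of positive rank, relative Serre duality merely reproduces the same relation and does not let you solve for $c_1(f_\ast\mathcal{L})$ separately. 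You therefore need a third independent tautological class $\mathfrak{c}=c_1(f_\ast\mathcal{L})$, and a separate formula for $\UnivGrdTorsMorph_\ast\mathfrak{c}$. The paper takes $\mathfrak{c}$ as a primitive class and imports $\UnivGrdTorsMorph_\ast\mathfrak{a}$, $\UnivGrdTorsMorph_\ast\mathfrak{b}$, $\UnivGrdTorsMorph_\ast\mathfrak{c}$ from the $\rho=0$ computations in \autocite{F09}; once you add $\mathfrak{c}$ to your list, your outline goes through.

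A smaller point: the correction from $R^1 f_\ast(\mathcal{L}\otimes\eta)$ is not of codimension $\geq 2$. The paper checks that on a general point of $\Delta_0''$ one has $h^0(C,L\otimes\eta)=4$ rather than $2$, so $R^1$ is supported on an honest divisor; its first Chern class $\mathfrak{d}$ enters the formula with coefficient $-3$ and is then absorbed (together with the boundary components where $\DegenMorph$ is identically degenerate) into the effective class $E$. Your instinct that the $\dram$-coefficient is the delicate one is right: it comes in via the identity $f_\ast c_1(\PrymPoincare)^2=-\tfrac{1}{2}\dram$ from Lemma~\ref{lem:pushforward-dram}.
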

A suitable positive linear combination of $\MyDivisorBar$ and another divisor $\overline{\mathcal{D}}_{15:2}$,
which was described in \autocite{FL}, then shows that the canonical class of $\RgBar{15}$ is big.

To be able to calculate the class of $\MyDivisorBar$, various technical difficulties have
to be overcome.  In Section \ref{sec:rg-construction}
we closely follow the set-up of \autocite{F09,FL}
to construct partial compactifications of suitable open subsets of $\Mg{g}$ and $\Rg{g}$
and to extend the moduli stacks of linear series there.  We also make use of a result in \autocite{FL}
showing that all pluricanonical forms defined on the smooth part of $\RgBar{g}$ extend
to any resolution of singularities.

\paragraph{Acknowledgements}
This project was generously supported by the
Gra\-du\-ier\-tenkolleg 1800 of the Deutsche Forschungsgemeinschaft
and by the Berlin Mathematical School.
I would like to thank my PhD advisor Gavril Farkas
for his suggestion to study the circle of problems to which this question belongs, as well
as for many helpful discussions.  
I also thank the anonymous referee for careful reading and helpful comments.

\section{The moduli space of Prym curves}
\label{sec:rg-construction}

We follow the techniques and notations introduced in \autocite[Section 1]{FL}.
First we recall the basic definitions.

A \emph{smooth Prym curve} is a pair $[C,\eta]$ where $[C]\in \Mg{g}$ is a smooth
curve and $\eta \in \Pic^0(C)\setminus\{\mathcal{O}_C\}$ is such that $\eta^{\otimes 2} \cong \mathcal{O}_C$.
To such a pair we can naturally associate an étale double cover $f\colon C' \rightarrow C$
where $C'$ is given as $\RelSpec( \mathcal{O}_C \oplus \eta )$.  Conversely, every étale
double cover determines a unique $2$-torsion bundle $\eta$ on $C$.

We denote by $\Rg{g}$ the moduli
space of smooth Prym curves of genus $g$ and by $\RgForgetfulMap\colon \Rg{g} \rightarrow \Mg{g}$
the forgetful morphism $[C,\eta] \mapsto [C]$ of degree $2^{2g} - 1$.
The corresponding morphism on stacks is étale and denoted by $\RgForgetfulMap\colon \RgStack{g} \rightarrow \MgStack{g}$ as well.

\subsection{Compactifying the space of Prym curves}

In order to compactify $\Rg{g}$, we make the following definitions.
We say that a smooth rational component of a nodal curve is \emph{exceptional}
if it meets the other components in exactly two points.
A nodal curve is called \emph{quasistable} if every smooth rational
component meets the rest of the curve in at least two points,
and the exceptional components are pairwise disjoint.
\begin{definition}
  A \emph{Prym curve} of genus $g$ is a triple $(C, \eta, \beta)$ consisting of a quasistable
  curve $C$ of genus $g$,
  a line bundle $\eta\in\Pic^0(C)$
  and a sheaf homomorphism $\beta\colon \eta^{\otimes 2} \rightarrow \mathcal{O}_C$,
  subject to the following conditions:
  \begin{enumerate}
  \item For each exceptional component $E$ of $C$ we have $\eta|_E = \mathcal{O}_E(1)$.
  \item For each nonexceptional component the morphism $\beta$ is not the zero morphism.
  \end{enumerate}
  A \emph{family of Prym curves} over a scheme $S$ is a triple $(\mathcal{C} \rightarrow S, \eta, \beta)$
  where $\mathcal{C} \rightarrow S$ is a a flat family of quasistable curves, $\eta$
  is a line bundle on $\mathcal{C}$ and $\beta\colon \eta^{\otimes 2} \rightarrow \mathcal{O}_{\mathcal{C}}$
  is a sheaf homomorphism such that for each fiber $C_s = \mathcal{C}(s)$ the triple $(C_s, \eta|_{C_s}, \beta|_{C_s})$
  is a Prym curve.
\end{definition}
If there is no danger of confusion, we usually omit the morphism $\beta$
from the data to describe a Prym curve.
We denote by $\RgBarStack{g}$ the (nonsingular Deligne--Mumford) stack of Prym curves of genus $g$
and its coarsening by $\RgBar{g}$.  The locus $\Rg{g}$ of smooth Prym curves is
contained in $\RgBar{g}$ as an open subset and the forgetful map $\RgForgetfulMap$ extends
to a ramified covering $\RgBar{g} \rightarrow \MgBar{g}$ which we also denote by $\RgStackForgetfulMap$.
Note that by blowing down all exceptional components of a quasistable curve we obtain a stable curve.
It should also be remarked that there is a close relationship between the Prym curves discussed here
and admissible covers in the sense of Beauville \autocite{Beau77}.
For a detailed treatment of the previous statements, see \autocite{BCF,Ber99}.

\subsection{Boundary divisors}

We study the boundary components of $\RgBar{g}$.
They lie over the boundary of $\MgBar{g}$, so we can study
the components lying over $\Delta_i$ for $i = 0,\dotsc,\lfloor \frac{g}{2} \rfloor$.
As is customary, we denote by $\delta_i$ the corresponding divisor classes.

\paragraph{The divisors $\bm{{\Delta_i}},\bm{{\Delta}_{g-i}},\bm{{\Delta}_{g:i}}$ for $\bm{{i\geq 1}}$.}

First consider $i\geq 1$
and let $X \in \Delta_i$ be general, i.e., $X = C\cup D$ is the union of two curves of genera $i$ and $g-i$
meeting transversally in a single node.  The line bundle $\eta\in\Pic^0(X)$ on the corresponding Prym curve
is determined by its restrictions $\eta_C = \eta|_C$ and $\eta_D = \eta|_D$ satisfying
$\eta_C^{\otimes 2} = \mathcal{O}_C$ and $\eta_D^{\otimes 2} = \mathcal{O}_D$.

Either one of $\eta_C$ and $\eta_D$ (but not both) can be trivial, so $\RgForgetfulMap^\ast(\Delta_i)$ splits into three
irreducible components
\begin{equation*}
  \RgForgetfulMap^\ast(\Delta_i) = \Delta_i + \Delta_{g-i} + \Delta_{i:g-i}
\end{equation*}
where the general element in $\Delta_i$ is $[C\cup D, \eta_C \not= \mathcal{O}_C, \mathcal{O}_D]$,
the generic point of $\Delta_{g-i}$ is of the form $[C\cup D, \mathcal{O}_C, \eta_D \not= \mathcal{O}_D]$
and the generic point of $\Delta_{i:g-i}$ looks like $[C\cup D, \eta_C \not= \mathcal{O}_C, \eta_D\not= \mathcal{O}_D]$.

\paragraph{The divisor $\bm{{\Delta_0''}}$.}

Now let $i = 0$.  The generic point of $\Delta_0$ in $\MgBar{g}$ is a one-nodal irreducible curve $C$
of geometric genus $g - 1$.  We first consider points of the form $[C, \eta]$ lying over $C$, i.e.,
without an exceptional component.  Denote by $\nu\colon \normaliz{C} \rightarrow C$ the normalization
and by $p,q$ the preimages of the node.
Then we have an exact sequence
\begin{equation*}
  0 \rightarrow \CC^\ast \rightarrow \Pic^0(C) \xrightarrow{\nu^\ast} \Pic^0(\normaliz{C}) \rightarrow 0 
\end{equation*}
which restricts to
\begin{equation*}
  0 \rightarrow \ZZ/2\ZZ \rightarrow \Pic^0(C)[2] \xrightarrow{\nu^\ast} \Pic^0(\normaliz{C})[2] \rightarrow 0 
\end{equation*}
on the $2$-torsion part.  The group $\ZZ/2\ZZ$ represents the two possible choices of gluing of the fibers
at $p$ and $q$ for each line bundle in $\Pic^0(\normaliz{C})[2]$.
For the case $\nu^\ast \eta = \mathcal{O}_{\normaliz{C}}$ there is exactly
one possible choice of $\eta \not= \mathcal{O}_C$.  These curves $[C, \eta]$ correspond to the classical
\emph{Wirtinger covers}
\begin{equation*}
  \normaliz{C}_1\amalg \normaliz{C}_2 / (p_1 \sim q_2, p_2 \sim q_1)  \xrightarrow{2:1}  \normaliz{C}/(p\sim q) = C.
\end{equation*}
We denote by $\Delta_0''$ the closure of the locus of Wirtinger covers.

\paragraph{The divisor $\bm{{\Delta_0'}}$.}

On the other hand, there are $2^{2(g-1)} - 1$ nontrivial elements in the group $\Pic^0(\normaliz{C})[2]$.
For each of them there are two choices of gluing, so we have a total of $2 \cdot (2^{2g -2} - 1)$ choices for
$\eta$ such that $\nu^\ast \eta \not= \mathcal{O}_{\normaliz{C}}$.  We let $\Delta_0'$ be the closure of the locus
of pairs $[C,\eta]$ such that $\nu^\ast\eta \not = \mathcal{O}_{\normaliz{C}}$.

\paragraph{The divisor $\bm{{\Dram}}$.}

Let us turn to the case of curves of the form $[X = \widetilde{C}\cup_{p,q} E, \eta]$ where $E$ is an exceptional component.
The stabilization of such a curve is again a one-nodal curve $C$.
Denote by $\beta$ the morphism $\eta^{\otimes 2} \rightarrow \mathcal{O}_X$.
Since $\eta|_E = \mathcal{O}_E(1)$, we must have $\beta_{E\setminus\{p,q\}} = 0$
and $\deg(\eta^{\otimes 2}|_{\normaliz{C}}) = -2$.
It follows that $\eta^{\otimes 2}|_{\normaliz{C}} = \mathcal{O}_{\normaliz{C}} ( - p - q )$.
There are $2^{2(g-1)}$ choices of square roots
of $\mathcal{O}_{\normaliz{C}}(-p-q)$ and each of these determines uniquely
a Prym curve $[X, \eta]$ of this form.
We denote the closure of the locus of such curves by $\Dram$.

As a result of the local analysis carried out for instance in \autocite{CEFS}, we see that $\RgForgetfulMap$ is simply
ramified over $\Dram$ and unramified everywhere else.  This gives the relation
\begin{equation*}
  \RgForgetfulMap^\ast (\delta_0) = \delta_0' + \delta_0'' + 2\dram.
\end{equation*}
\begin{dummyversion}
  \begin{remark}
    We also have the following relations on pushforwards, a result of the analysis carried out in the previous Section:
    \begin{equation*}
      \RgForgetfulMap_\ast  (\delta_0') = 2 \cdot (2^{2g-2} - 1)\delta_0,\quad \RgForgetfulMap_\ast (\delta_0'') = \delta_0,\quad \RgForgetfulMap_\ast (\dram) = 2^{2g-2}\delta_0
    \end{equation*}
  \end{remark}
\end{dummyversion}

\subsection{The canonical class}

In order to show that $\RgBar{g}$ is of general type, we need to show the canonical class is
big for some desingularization $\RgBarDesing{g}$ of $\RgBar{g}$.
Using the following extension result we see that all pluricanonical differentials on the smooth part of
$\RgBar{g}$ extend to $\RgBarDesing{g}$.
\begin{theorem}[{\cite[Theorem 6.1]{FL}}]
  Let $g\geq 4$ and $\RgBarDesing{g} \rightarrow \RgBar{g}$ be any desingularization.
  Then every pluricanonical form defined on the smooth locus $\RgBarReg{g}$ of $\RgBar{g}$
  extends holomorphically to $\RgBarDesing{g}$; that is,
  for all integers $l\geq 0$ we have isomorphisms
  \begin{equation*}
    H^0 \Big( \RgBarReg{g}, K^{\otimes l}_{\RgBar{g}} \Big)
    \cong
    H^0 \Big( \RgBarDesing{g}, K^{\otimes l}_{\RgBarDesing{g}} \Big).
  \end{equation*}
\end{theorem}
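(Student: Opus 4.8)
The argument follows the method of Harris and Mumford for $\MgBar{g}$, in the form carried out for Prym moduli in \autocite{FL}. Since $\RgBarStack{g}$ is a smooth Deligne--Mumford stack, its coarsening $\RgBar{g}$ has only finite quotient singularities: analytically near a point $[C,\eta,\beta]$ it is a quotient $D/G$, where $D$ is the $(3g-3)$-dimensional versal deformation space of the Prym curve (with tangent space the appropriate $\Ext^1$) and $G = \Aut(C,\eta,\beta)$ acts on it, linearly after a suitable choice of coordinates. A pluricanonical form $\omega$ on $\RgBarReg{g}$ pulls back to a $G$-invariant rational pluricanonical form on $D$; since $\pi^\ast K_{D/G} = K_D - \sum(e_i - 1)R_i$ in terms of the ramification divisors $R_i$ of $\pi\colon D \to D/G$, the pullback $\pi^\ast\omega$ actually vanishes along each $R_i$, so it is regular on $D$ away from the loci fixed by non-quasi-reflections; the latter have codimension $\geq 2$, so $\pi^\ast\omega$ extends to all of $D$. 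Thus it suffices to prove the local statement that $D/G$ has canonical singularities wherever $G$ acts without quasi-reflections, and that the quasi-reflection loci do not obstruct extension to a resolution.

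The canonical-singularity condition is the Reid--Tai criterion: for each $\sigma \in G$, $\sigma\neq 1$, that is not a quasi-reflection, writing its eigenvalues on the tangent space of $D$ as $e^{2\pi i a_j/m}$ with $0\leq a_j < m = \ord\sigma$, one must check $\tfrac{1}{m}\sum_j a_j \geq 1$. Over the interior $\Rg{g}$ this is immediate from the corresponding fact for $\Mg{g}$, since there $D$ is the deformation space of $C$ alone and $\Aut(C,\eta)$ is a subgroup of $\Aut(C)$ acting in the same way. Over the boundary one stratifies by the dual graph of the quasistable curve together with the combinatorial type of $\eta$ and $\beta$, splits $D$ into the node-smoothing directions and the directions deforming the normalized components, and verifies the inequality stratum by stratum; the contributions that are new relative to $\MgBar{g}$ are those of the exceptional $\PP^1$-components and of the ramification divisor $\Dram$, together with the fact that automorphisms may now permute the boundary data in Prym-specific ways.

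Finally one must dispose of the quasi-reflections. A deformation-theoretic analysis shows that an element of $\Aut(C,\eta,\beta)$ can act as a quasi-reflection on $D$ only in very restricted situations — an elliptic-tail involution together with its torsion variants and its degenerations over $\Delta_0'$, $\Delta_0''$ and $\Dram$. The quotient of $D$ by the subgroup generated by all quasi-reflections is smooth by the Chevalley--Shephard--Todd theorem, and a direct local computation, exactly parallel to Harris and Mumford's treatment of the elliptic-tail locus of $\MgBar{g}$, shows that the residual quotient still admits extensions of every pluricanonical form; this is the step that forces $g\geq 4$, since for smaller genus the elliptic-tail and hyperelliptic-type automorphisms violate the required bound. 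Granting all of this, $H^0\!\big(\RgBarReg{g},K^{\otimes l}_{\RgBar{g}}\big) \cong H^0\!\big(\RgBarDesing{g},K^{\otimes l}_{\RgBarDesing{g}}\big)$ follows from the standard fact that canonical singularities impose no conditions on the extension of pluricanonical forms. I expect the main obstacle to be the boundary bookkeeping of the last two paragraphs: listing the automorphism types of degenerate Prym curves, determining precisely how each acts on the deformation space (including the exceptional components and the morphism $\beta$), checking the Reid--Tai inequality uniformly, and isolating the low-genus configurations that account for the hypothesis $g\geq 4$.
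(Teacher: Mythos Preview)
The paper does not prove this theorem at all: it is stated with attribution \autocite[Theorem 6.1]{FL} and used as a black box, with no proof or sketch given. Your proposal is a faithful outline of the Harris--Mumford/Reid--Tai method as adapted to $\RgBar{g}$ in \autocite{FL}, so it is consistent with the cited source, but there is nothing in the present paper to compare it against.
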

Furthermore, one has the expression
\begin{equation*}
  K_{\RgBar{g}} = 13 \lambda - 2 \dprimeboundary - 3 \dram
  - 2 \sum_{i = 1}^{\lfloor g/2 \rfloor} ( \delta_i + \delta_{g-i} + \delta_{i:g-i} )
  - (\delta_1 + \delta_{g-1} + \delta_{1:g-1})
\end{equation*}
for the canonical class $K_{\RgBar{g}}$
in terms of the divisor classes introduced before (see for example \autocite[Theorem 1.5]{FL}).
Here we have abused notation and set $\lambda = \RgForgetfulMap^\ast(\lambda)$,
the pullback of the Hodge class from $\MgBar{g}$.  
It is therefore enough to exhibit an effective divisor $D$ of the form
\begin{equation*}
  D = a \lambda - (b_0' \delta_0' + b_0'' \delta_0'') - b_0^{\mathrm{ram}} \dram
  - \sum_{i = 1}^{\lfloor g/2 \rfloor} (b_i \delta_i + b_{g-i} \delta_{g-i} + b_{i:g-i} \delta_{i:g-i} )
\end{equation*}
such that
\begin{equation*}
  \frac{a}{\gamma} < \frac{13}{2}\quad\text{ for all } \gamma \in
  \left\{b_0', b_0''\right\} \cup \left.\left\{b_i, b_{g-i}, b_{i:g-i} ~\right|~ i = 1,\dotsc,\lfloor g/2 \rfloor \right\}
\end{equation*}
as well as
\begin{equation*}
  \frac{a}{\gamma} < \frac{13}{3}\quad \text{ for all } \gamma \in \left\{b_0^{\mathrm{ram}}, b_1, b_{g-1}, b_{1:g-1} \right\}.
\end{equation*}
\begin{remark}
  \label{rem:coefficients-bounded}
  Actually, the situation turns out to be simpler.
  Proposition 1.9 of \autocite{FL} shows that for $g \leq 23$ it is enough
  to consider the coefficients of $\lambda$,
  $\delta_0'$, $\delta_0''$ and $\dram$.  If they satisfy the inequalities above,
  the other boundary divisor coefficients are automatically suitably bounded.
  We will make full use of the fact that we do not have to consider singular curves of compact type.
\end{remark}

\subsection{The universal Prym curve}

Since we are only concerned with the boundary divisors $\Delta_0'$, $\Delta_0''$ and $\Dram$,
we partially compactify $\Mg{g}$ by adding the open sublocus $\BoundaryPCIrr \subset \Delta_0$
of one-nodal irreducible curves.  Set
\begin{equation*}
  \MgBarPCIrr{g} = \Mg{g} \cup \BoundaryPCIrr
\end{equation*}
and let $\RgBarPCIrr{g} = \RgForgetfulMap^{-1}( \MgBarPCIrr{g} )$.  We also set
\begin{equation*}
  \UnivPrymCurveNaive = \RgBarStackPCIrr{g} \times_{\MgBarStackPCIrr{g}} \MgBarStackPCIrr{g,1}.
\end{equation*}
This is not yet the universal Prym curve over $\RgBarStackPCIrr{g}$, since the points
on exceptional components of curves in $\Dram$ are not present.  We have to blow up $\UnivPrymCurveNaive$
along the locus $V$ of points
\begin{equation*}
  (X, \eta_X, p = q) \in \Dram \times_{\MgBarStackPCIrr{g}} \MgBarStackPCIrr{g,1}, \quad
  X = C \cup_{\{p,q\}} E \rightarrow C/p\sim q,\quad \eta_E = \mathcal{O}_E(1).
\end{equation*}
Set $\UnivPrymCurve{g} = \Bl_V(\UnivPrymCurveNaive)$ and let
$\UnivPrymCurveMorph\colon \UnivPrymCurve{g} \rightarrow \RgBarStackPCIrr{g}$ be the induced universal
family of Prym curves.
The family $\UnivPrymCurve{g}$ comes equipped with a Poincaré bundle $\PrymPoincare$
such that $\PrymPoincare|_{\UnivPrymCurveMorph^{-1} ([X,\eta,\beta])} = \eta$.
We need the following result from \autocite[Proposition 1.6]{FL}:
\begin{lemma}
  \label{lem:pushforward-dram}
  In $\Pic( \RgBarPCIrr{g} )$ we have $\UnivPrymCurveMorph_\ast ( c_1^2 ( \mathscr{P} ) ) = -\dram/2$
  and $\UnivPrymCurveMorph_\ast ( c_1(\PrymPoincare) c_1(\omega_\UnivCrdTorsMorph)) = 0$.
\end{lemma}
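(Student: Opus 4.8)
The plan is to reduce both identities to the geometry of the blow-up $\UnivPrymCurve{g} = \Bl_V(\UnivPrymCurveNaive)$. Write $j\colon \mathcal{E} \hookrightarrow \UnivPrymCurve{g}$ for the exceptional divisor: it is the locus of all points lying on exceptional components, the morphism $\UnivPrymCurveMorph|_{\mathcal{E}}$ factors through the isomorphism $V \xrightarrow{\sim} \Dram$, and $\UnivPrymCurveMorph|_{\mathcal{E}}\colon \mathcal{E} \to \Dram$ is a $\mathbb{P}^1$-bundle whose fibre over $[X,\eta] = [\widetilde C \cup_{p,q} E,\eta] \in \Dram$ is the exceptional curve $E$. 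The only nontrivial inputs needed are a clean expression for $c_1(\mathscr{P})$ and the restrictions of $\mathscr{P}^{\otimes 2}$ and $\omega_{\UnivCrdTorsMorph}$ to the fibres of this $\mathbb{P}^1$-bundle.

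For the first input I would exploit the universal homomorphism $\beta\colon \mathscr{P}^{\otimes 2} \to \mathcal{O}_{\UnivPrymCurve{g}}$, viewed as a global section of $\mathscr{P}^{\otimes -2}$. Over $\Rg{g}$ it is fibrewise an isomorphism, hence an isomorphism, so $c_1(\mathscr{P})$ vanishes there; globally $\beta$ degenerates only over $\Dram$, and the local model of $\UnivPrymCurveNaive$ along $V$ (the $A_1$-singularity reflecting that $\RgForgetfulMap$ is simply ramified along $\Dram$) together with its blow-up shows that the zero divisor of $\beta$ is the reduced divisor $\mathcal{E}$, with multiplicity one: restricting to a fibre $X = \widetilde C \cup_{p,q} E$ one has $\eta^{\otimes 2}|_{\widetilde C} = \mathcal{O}_{\widetilde C}(-p-q)$, so $\mathscr{P}^{\otimes -2}$ has degree $2$ on $\widetilde C$, which matches $\mathcal{O}(m\,\mathcal{E})|_{\widetilde C}$ only for $m = 1$. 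Hence
\begin{equation*}
  \mathscr{P}^{\otimes -2} \cong \mathcal{O}_{\UnivPrymCurve{g}}(\mathcal{E}),
  \qquad
  c_1(\mathscr{P}) = -\tfrac12\,[\mathcal{E}] \in \Pic(\UnivPrymCurve{g})_\QQ .
\end{equation*}
Since $\eta|_E = \mathcal{O}_E(1)$ on every exceptional component, restricting the first isomorphism identifies $N_{\mathcal{E}/\UnivPrymCurve{g}}$ with $\mathscr{P}^{\otimes -2}|_{\mathcal{E}}$, a line bundle of fibre degree $-2$ on $\mathcal{E} \to \Dram$; and since $E \cong \mathbb{P}^1$ meets the rest of $X$ in the two points $p,q$, we have $\omega_{\UnivCrdTorsMorph}|_E = \omega_{\mathbb{P}^1}(p+q) = \mathcal{O}_E$, of degree $0$.

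Now the two pushforwards are formal. For the first, $c_1^2(\mathscr{P}) = \tfrac14\,c_1(\mathscr{P}^{\otimes -2})^2 = \tfrac14\,[\mathcal{E}]^2 = \tfrac14\,j_\ast\big(c_1(N_{\mathcal{E}/\UnivPrymCurve{g}})\big)$; pushing a fibrewise-degree-$(-2)$ class forward along the $\mathbb{P}^1$-bundle $\mathcal{E} \to \Dram$ yields $-2\,[\Dram] = -2\,\dram$, so
\begin{equation*}
  \UnivPrymCurveMorph_\ast\big(c_1^2(\mathscr{P})\big) = \tfrac14\,\UnivPrymCurveMorph_\ast\big([\mathcal{E}]^2\big) = \tfrac14\,(-2\,\dram) = -\tfrac12\,\dram .
\end{equation*}
For the second, $c_1(\mathscr{P})\,c_1(\omega_{\UnivCrdTorsMorph}) = -\tfrac12\,[\mathcal{E}]\,c_1(\omega_{\UnivCrdTorsMorph}) = -\tfrac12\,j_\ast\big(c_1(\omega_{\UnivCrdTorsMorph}|_{\mathcal{E}})\big)$, and $\omega_{\UnivCrdTorsMorph}|_{\mathcal{E}}$ has degree $0$ on the fibres of $\mathcal{E} \to \Dram$, hence is pulled back from $\Dram$ and pushes forward to $0$; thus $\UnivPrymCurveMorph_\ast\big(c_1(\mathscr{P})\,c_1(\omega_{\UnivCrdTorsMorph})\big) = 0$.

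The main obstacle is the exact statement $\mathscr{P}^{\otimes -2} \cong \mathcal{O}_{\UnivPrymCurve{g}}(\mathcal{E})$, i.e.\ that the zero scheme of $\beta$ is precisely the reduced exceptional divisor, with no extra components and multiplicity one. This requires the explicit local analysis of $\UnivPrymCurveNaive$ near $V$ and of $\Bl_V$ carried out in \autocite[Section 1]{FL} (see also \autocite{CEFS}), in particular handling the $A_1$-singularities produced by the ramification of $\RgForgetfulMap$ and the point that restricting the Cartier divisor $\operatorname{div}(\beta)$ to a singular fibre does not literally return $\operatorname{div}(\beta|_X)$; once that is in hand, everything above is bookkeeping. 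Note that knowing $c_1(\mathscr{P}) = -\tfrac12[\mathcal{E}]$ only up to a class pulled back from the boundary of $\RgBarPCIrr{g}$ would still give the first identity, since $\UnivPrymCurveMorph_\ast[\mathcal{E}] = 0$ kills the pullback term, but the second identity genuinely uses the exact equality.
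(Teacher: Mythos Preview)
Your proof is correct. The paper does not actually prove this lemma; it merely quotes it from \autocite[Proposition 1.6]{FL}, so there is no in-paper argument to compare against. Your approach---identifying $\mathscr{P}^{\otimes -2}\cong\mathcal{O}_{\UnivPrymCurve{g}}(\mathcal{E})$ via the universal $\beta$ and then reading off both pushforwards from the $(-2)$-curve geometry of the exceptional fibres---is exactly the argument given in \autocite{FL}. One small point you leave implicit: to conclude that $\operatorname{div}(\beta)$ is supported only on $\mathcal{E}$ (and not also on the non-exceptional divisor $\widetilde{\mathcal{C}}\subset f^{-1}(\Dram)$) you are using that $\beta$ is nonzero on every non-exceptional component, which is part of the definition of a Prym curve; once that is said, your degree check on $\widetilde{C}$ indeed pins down the multiplicity as $1$.
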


\subsection{Moduli spaces of linear series over the Prym moduli space}

To compute the classes of divisors on $\RgBar{g}$, a viable method is to give them a determinantal description,
i.e., exhibit them as degeneracy loci of morphisms of vector bundles.  To obtain these vector bundles,
we consider the stack $\UnivGrdTors{r}{d}$ parametrizing triples
$[C,\eta,L]$ where $[C,\eta] \in \Rg{g}$
and $L \in G^r_d(C)$.  Note that in the case $\rho(g,r,d) = 0$ in which we are interested, the forgetful map
$\UnivGrdTors{r}{d} \rightarrow \Rg{g}$ is a generically finite cover of degree
\begin{equation*}
  N = g!\, \frac{1!\, 2! \dotsb r!}{(g - d + r)! \dotsb (g - d + 2r)!}.
\end{equation*}
We want to first restrict this construction to an open subset of $\Rg{g}$ such that
various pushforwards of the Poincaré bundles on the universal curve are indeed vector
bundles on $\UnivGrdTors{r}{d}$.  Then we shall extend the stack over a suitable partial
compactification to be able to also determine the behavior on the boundary.

Let $\MgStackPCZero{g}$ be the open substack of $\MgStack{g}$ classifying curves $C$
with $W^{r+1}_{d}(C) = \emptyset$ and $W^{r}_{d-1}(C) = \emptyset$.  A general such curve
indeed has a finite amount of $\mathfrak{g}^r_d$ linear series and all of them are very ample.
Observe that both
\begin{equation*}
  \rho(g, r+1, d) = - (g - d + 2(r+1)) \leq -2, \quad \rho(g, r, d-1) = -(r+1) \leq -2,
\end{equation*}
so the codimension of the complement of $\MgPCZero{g}$ in $\Mg{g}$ is at least $2$,
for instance by results in \autocite{EH89}.  Therefore,
restricting to $\MgPCZero{g}$ does not change divisor class calculations.

To partially compactify $\MgStackPCZero{g}$, add the locus $\BoundaryPCZero$ of
Brill--Noether general irreducible one-nodal curves, i.e., $[C/p\sim q]$ with $[C]\in \Mg{g-1}$
satisfying the Brill--Noether theorem.  Denote by $\MgBarStackPCZero{g} = \MgStackPCZero{g} \cup \BoundaryPCZero$
the resulting partial compactification.  Over $\MgBarStackPCZero{g}$ we consider
the stack of pairs $[C,L]$ where $L \in G^r_d(C)$.  We denote this stack by $\UnivGrdPCZero{r}{d}$.
Pulling back the universal curve
$\MgBarStackPCZero{g,1}$ to $\UnivGrdPCZero{r}{d}$, we get a universal family
\begin{equation*}
  \UnivCrdMorph{r}{d}\colon \UnivCrdPCZero{r}{d}
  = \UnivGrdPCZero{r}{d} \times_{\MgBarStackPCZero{g}} \MgBarStackPCZero{g,1}
  \rightarrow \UnivGrdPCZero{r}{d}
\end{equation*}
and we choose a Poincaré bundle, i.e., an $\Poincarebundle \in \Pic(\UnivCrdPCZero{r}{d})$ such that
$\Poincarebundle|_{(\UnivCrdMorph{r}{d})^{-1}([C,L])} = L$ for every $[C,L] \in \UnivGrdPCZero{r}{d}$.

We are now ready to pull these constructions back to Prym curves.
We let $\RgBarStackPCZero{g} = \RgStackForgetfulMap^{-1}( \MgBarStackPCZero{g} )$ and
\begin{equation*}
  \UnivGrdTorsMorph\colon \UnivGrdTorsPCZero{r}{d}
  = \UnivGrdPCZero{r}{d} \times_{\MgBarStackPCZero{g}} \RgBarStackPCZero{g}
  \rightarrow \RgBarStackPCZero{g}
\end{equation*}
be the stack parametrizing triples $[C,\eta,L]$ for $[C,\eta]\in \RgBarStackPCZero{g}$
and $L \in W^r_d(C)$.  We also have the universal curve
\begin{equation*}
  \UnivCrdTorsMorph\colon \UnivCrdTorsPCZero{r}{d}
  = \UnivPrymCurve{g} \times_{\RgBarStackPCZero{g}} \UnivGrdTorsPCZero{r}{d}
  \rightarrow \UnivGrdTorsPCZero{r}{d}.
\end{equation*}
By pulling back from $\RgBarStackPCZero{g}$ and $\UnivGrdTorsPCZero{r}{d}$, respectively, this comes equipped with
two Poincaré bundles $\PrymPoincare$ and $\Poincarebundle$.  We can also use $\UnivGrdTorsMorph$ to
pull back the boundary classes $\Delta_0'$, $\Delta_0''$ and $\Dram$ from $\RgBarStackPCZero{g}$ to
$\UnivGrdTorsPCZero{r}{d}$.  Slightly abusing notation, the pullbacks will be denoted by the same symbols. 

\section{A new divisor on \texorpdfstring{$\bm{\RgBar{15}}$}{R15}}

As before, we denote by $\UnivCrdTorsMorph\colon \CrdSuit{4}{16} \rightarrow \GrdSuit{4}{16}$
the universal curve and let $\Poincarebundle$ be a Poincaré bundle on $\CrdSuit{4}{16}$.
Furthermore, let $\omega_\UnivCrdTorsMorph$ be the relative dualizing sheaf of $\UnivCrdTorsMorph$ and
$\UnivGrdTorsMorph\colon \GrdSuit{4}{16} \rightarrow \RgSuit{15}$
be the generically finite cover of degree $N = 6006$.

By construction of our moduli stacks and Grauert's theorem,
the pushforwards of $\Poincarebundle$ and $\Poincarebundle^{\otimes 2}$
by $\UnivCrdTorsMorph$ are vector bundles on $\GrdSuit{4}{16}$ of ranks $5$ and $18$, respectively.
The sheaf $\UnivCrdTorsMorph_\ast (\Poincarebundle\otimes \PrymPoincare)$ is possibly not a
vector bundle, but at least it is torsion-free.  By excluding the subvariety
(of codimension at least two) where it fails to be locally free we can assume it is in fact
a vector bundle of rank $2$.  Divisor class calculations will not be affected.

We may then consider the following morphism of vector bundles of the same rank:
\begin{equation*}
  \degenmorph\colon \Sym^2 \UnivCrdTorsMorph_\ast (\Poincarebundle) \oplus \Sym^2 \UnivCrdTorsMorph_{\ast}
  (\Poincarebundle \otimes \PrymPoincare) \rightarrow \UnivCrdTorsMorph_\ast (\Poincarebundle^{\otimes 2}).
\end{equation*}
On the fiber over the class of a triple $[C,\eta,L]$ it is given by the multiplication map of sections
\begin{equation}
  \label{eq:multiplication-map}
  \mu_{[C,\eta,L]}\colon \Sym^2 H^0(C, L) \oplus \Sym^2 H^0(C, L\otimes \eta) \rightarrow H^0(C, L^{\otimes 2}).
\end{equation}
The closure of the locus
\begin{equation*}
  \MyDivisor = \left\{ [C, \eta] \in \Rg{15} ~\left|~
  \exists L\in W^4_{16}(C) \text{ such that } \mu_{[C,\eta,L]}\text{ is not an isomorphism} \right.\right\}
\end{equation*}
therefore has a determinantal description as the pushforward of the first degeneracy locus of the map $\degenmorph$.
Its expected codimension is one and we obtain a virtual divisor.
Note that while the vector bundles involved in defining $\degenmorph$ clearly depend on the choice of
the Poincaré bundle $\Poincarebundle$, the resulting morphism $\degenmorph$ does not (cf. the
remark before Theorem 2.1 in \autocite{F09}).

\subsection{Proof of divisoriality of \texorpdfstring{$\bm{{\MyDivisor}}$}{D15}}
\label{subsec:curve-example}

We now prove that $\MyDivisorBar$ is a genuine divisor, that is, $\mu_{[C,\eta,L]}$ is an isomorphism
for every $L\in W^4_{16}(C)$ on the general Prym curve $[C,\eta]$.  We will prove in 
Section \ref{subsec:grd-irreducible} that $\UnivGrdTors{4}{16}$ over the whole space $\Rg{15}$
is irreducible.
Hence it will be enough to exhibit a single smooth curve $C$ and two line bundles
$L \in W^4_{16}(C)$ and $\eta\in\Pic^0(C)[2]$ such that the multiplication map
\eqref{eq:multiplication-map} is bijective.  We can then specialize the general element
of $\UnivGrdTors{4}{16}$ to this particular example and conclude by semicontinuity.

We start with a smooth nonhyperelliptic curve
$C \in \mathcal{M}_{15}$ possessing a theta characteristic $\vartheta$
with an exactly $5$-dimensional space of global sections, i.e., $|\vartheta|\in G^4_{14}(C)$ and
$\vartheta^{\otimes 2} \cong \omega_C$.
In order to construct an $L$ such that $\mu_{[C,\eta,L]}$ is bijective, $C$ should in
fact be half-canonically embedded by $\vartheta$ such that the image does not lie
on any quadric hypersurface in $\P^4$.  

Explicit models of such curves can be obtained as hyperplane sections of smooth canonical surfaces
$S \subseteq \P^5$ with $p_g = 6$ and $K_S^2 = 14$.
To construct such a surface, one can employ the method described by Catanese \autocite{Cat97}.
\begin{lemma}
  There exists a smooth projective surface $S$ of general type with invariants
  $(K_S^2, p_g, q) = (14, 6, 0)$, canonically embedded in $\P^5$,
  not lying on any quadric hypersurface.
\end{lemma}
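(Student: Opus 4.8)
The plan is to construct $S$ as a section of a vector bundle on a projective bundle over $\P^1$, following Catanese's structure theorem for regular surfaces of general type with $p_g=6$, $q=0$, $K_S^2=14$. By Catanese \autocite{Cat97}, such a surface (when the canonical map is birational onto its image $\Sigma \subseteq \P^5$) has a canonical ring resolved by a graded minimal free resolution over $\P^5$ whose shape is governed by the numerical invariants; the degree of $\Sigma$ is $K_S^2=14$, and the Hilbert function of $S$ in its canonical embedding gives $h^0(\P^5,\mathcal{O}(2)) = 21$ while $h^0(S,\omega_S^{\otimes 2}) = \chi(\omega_S^{\otimes 2}) = K_S^2 + \chi(\mathcal{O}_S) = 14 + 7 = 21$. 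First I would check that this forces the multiplication map $\Sym^2 H^0(\omega_S)\to H^0(\omega_S^{\otimes 2})$ to be an \emph{isomorphism} of $21$-dimensional spaces for a sufficiently general $S$ in the family, which is precisely the statement that the canonical image lies on no quadric. So the real content is exhibiting one surface in the family for which this $21\times 21$ determinant is nonzero.

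The key steps, in order: (1) Set up the Catanese-style parametrization — realize the candidate surfaces as zero loci of sections of an explicit bundle on $\P(\mathcal{E})$ for a suitable rank-$4$ bundle $\mathcal{E}$ on $\P^1$, chosen so that the generic such zero locus is smooth of general type with the prescribed invariants $(K_S^2,p_g,q)=(14,6,0)$ and regular. (2) Verify smoothness and the invariants for a general member, using Bertini on the linear system of sections together with a Chern-class computation of $K_S^2$, $\chi(\mathcal{O}_S)$, $q$ (the last via Leray for $\P(\mathcal{E})\to\P^1$ and the regularity of $\P^1$). (3) Confirm that $h^0(\omega_S)=6$ and that the canonical map is an embedding (very ampleness of $K_S$ — again generic behavior in the family, or a direct check on the model). (4) Finally, and most importantly, show $\Sym^2 H^0(\omega_S)\to H^0(\omega_S^{\otimes 2})$ is injective: since both sides have dimension $21$, equivalently surjective, equivalently $h^0(\mathcal{I}_{\Sigma/\P^5}(2))=0$. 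This can be done by a single explicit computation — e.g. writing down explicit quadratic and cubic generators of the ideal of one particular $S$ from the parametrization (or specializing the coefficients to a convenient point and computing the rank of the resulting multiplication matrix, e.g. over a finite field) — and then invoking semicontinuity to get the statement for the general member.

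The main obstacle is step (4): ruling out that \emph{every} surface in the Catanese family is contained in a quadric. A priori the family could be chosen so that all its members are special. I would address this either by appealing directly to Catanese's analysis, which shows that for the generic surface with these invariants the canonical ideal is generated in degrees $\geq 3$ (no quadratic syzygies), or, failing a clean citation, by the explicit-example-plus-semicontinuity route: pick coordinates, write one honest member $S_0$ of the family, compute $\operatorname{rk}\bigl(\Sym^2 H^0(\omega_{S_0})\to H^0(\omega_{S_0}^{\otimes 2})\bigr)$ and check it equals $21$. Once such an $S_0$ is produced, genericity follows, smoothness of the chosen $S_0$ is verified by the Jacobian criterion, and the remaining invariants are read off from the resolution. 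The computations of smoothness and of the rank of a $21\times 21$ matrix are routine (and machine-checkable), so the proof reduces to correctly setting up the Catanese model and exhibiting one good member.
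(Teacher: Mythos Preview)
Your overall strategy---construct a surface, verify the invariants, then check that $\Sym^2 H^0(\omega_S) \to H^0(\omega_S^{\otimes 2})$ is an isomorphism of $21$-dimensional spaces---is sound, and your Riemann--Roch count $h^0(\omega_S^{\otimes 2}) = K_S^2 + \chi(\mathcal{O}_S) = 21$ is exactly the one the paper uses. However, the specific construction you propose (zero loci of sections of a bundle on $\P(\mathcal{E})$ for a rank-$4$ bundle $\mathcal{E}$ on $\P^1$) is not Catanese's method, and you give no indication of which bundles to take or why the resulting surface would have $(K_S^2,p_g,q)=(14,6,0)$. As written, that part of the proposal is too vague to carry the argument; there is no evident reason such a projective-bundle model should exist for these invariants.

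The paper instead uses the Buchsbaum--Eisenbud Pfaffian construction, which is what \autocite{Cat97} actually provides here: take a generic antisymmetric $7\times 7$ matrix $\alpha$ of linear forms on $\P^5$ and let $S$ be cut out by the seven $6\times 6$ principal Pfaffians of $\alpha$. This produces the resolution
\[
  0 \longrightarrow \mathcal{O}_{\P^5}(-7) \longrightarrow \mathcal{O}_{\P^5}(-4)^{\oplus 7}
  \xrightarrow{\ \alpha\ } \mathcal{O}_{\P^5}(-3)^{\oplus 7} \longrightarrow \mathcal{I}_S \longrightarrow 0,
\]
from which canonical embeddedness, $p_g=6$, $q=0$, and (via the Hilbert polynomial) $K_S^2=14$ can all be read off directly. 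Your step~(4) is then automatic rather than the main obstacle: since $\mathcal{I}_S$ is a quotient of $\mathcal{O}_{\P^5}(-3)^{\oplus 7}$, the ideal is generated by cubics, and twisting the resolution by $\mathcal{O}(2)$ immediately gives $H^0(\mathcal{I}_S(2))=0$---no explicit example, finite-field computation, or semicontinuity argument required. You actually brush against this when you mention the possibility that ``the canonical ideal is generated in degrees $\geq 3$''; that \emph{is} the proof, and it comes for free from the Pfaffian resolution rather than needing a separate verification.
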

\begin{proof}
  The surfaces $S$ arise from Pfaffian resolutions
  \begin{equation}
    \label{eq:pfaffian-resolution}
    0 \rightarrow \mathcal{O}_{\P^5} (-7) \rightarrow \mathcal{O}_{\P^5} (-4)^{\oplus 7}
    \xrightarrow{\alpha} \mathcal{O}_{\P^5}(-3)^{\oplus 7} \xrightarrow{p} \mathcal{I}_S \rightarrow 0
  \end{equation}
  of the ideal sheaf $\mathcal{I}_S$,
  where $\alpha$ is a $7\times 7$ antisymmetric matrix with linear entries and $p$ is the map given by the Pfaffians
  of $6\times 6$ principal submatrices of $\alpha$.

  Using the projective resolution \eqref{eq:pfaffian-resolution} and Serre duality for Ext sheaves, we see that
  $S$ is canonically embedded.  We also see that $S$ is a regular surface (i.e., $q = 0$) and $p_g = 6$,
  which combines to give $\chi(\mathcal{O}_S) = 7$.
  Again using \eqref{eq:pfaffian-resolution}, the Hilbert polynomial of $\mathcal{O}_S$ is
  $P_S(t) = 7t^2 - 7t + 7$,
  which tells us $\deg(S) = 14$, and because $S$ is canonically embedded we have $K_S^2 = 14$.
\end{proof}
A general hyperplane section $C = H\cap S$ of $S$ has, by the adjunction formula,
\begin{equation*}
  \omega_C \cong (\mathcal{O}_S(1)\otimes \omega_S)|_C \cong \omega_S^{\otimes 2}|_C,\quad 2g - 2 = 2K_S \cdot K_S = 28,
\end{equation*}
so $C \hookrightarrow \P^4$ is half-canonically embedded of degree $14$ and genus $15$.
Using the exact sequence
\begin{equation*}
  0 \rightarrow \mathcal{I}_S(2) \rightarrow \mathcal{O}_{\P^5}(2) \rightarrow \mathcal{O}_S(2) \rightarrow 0
\end{equation*}
and $h^0(S,\omega_S^{\otimes 2}) = 21$ by Riemann--Roch, we get $H^0(\P^5,\mathcal{I}_S(2)) = 0$,
so $S$ does not lie on a quadric hypersurface of $\P^5$.  The same then applies for $C$ in $\PP^4$.
A moduli count shows that hyperplane sections of such $S$ form a $32$-dimensional family.
\begin{remark}
  This is not the only way in which such curves arise.  Iliev and Markushevich \autocite{IM00} also obtain
  a $32$-dimensional family (i.e., an irreducible component of the expected dimension of the locus
  $\mathcal{T}^4_{15}$ of curves of genus $15$ having a theta-characteristic with $5$ independent
  global sections)
  as vanishing loci of sections of rank $2$ ACM bundles on quartic $3$-folds in $\P^4$.
\end{remark}
\begin{lemma}
  For a half-canonically embedded curve $C$ in $\P^4$ not lying on a quadric hypersurface,
  the multiplication map $\mu_{[C,\eta,L]}$ is bijective.
\end{lemma}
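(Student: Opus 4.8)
We are given a curve $C$ of genus $15$ half-canonically embedded in $\P^4$ by a theta characteristic $\vartheta$ with $h^0(C,\vartheta) = 5$, with the property that $C$ lies on no quadric in $\P^4$. The line bundle $L$ in the multiplication map is $L = \vartheta$ itself (this is the natural candidate: $\deg \vartheta = 14$, $h^0 = 5$, so $|\vartheta| \in W^4_{14}(C)$, while the divisor $\MyDivisor$ is phrased in terms of $W^4_{16}$; I expect the author either uses $\vartheta$ directly after reindexing, or $L = \vartheta(p+q)$ for a general pair of points, but the cleanest version of the argument is with $L = \vartheta$, for which $L^{\otimes 2} = \omega_C$). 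I will write the plan for $L = \vartheta$.

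**The plan.** The key point is the twist by $\eta$: the multiplication map
\begin{equation*}
  \mu_{[C,\eta,\vartheta]} \colon \Sym^2 H^0(C,\vartheta) \oplus \Sym^2 H^0(C,\vartheta\otimes\eta) \longrightarrow H^0(C,\vartheta^{\otimes 2}) = H^0(C,\omega_C)
\end{equation*}
is a map between vector spaces of dimension $15 + \dim\Sym^2 H^0(C,\vartheta\otimes\eta)$ on the left and $15$ on the right, so for $\mu$ to be an \emph{isomorphism} we need $H^0(C,\vartheta\otimes\eta) = 0$ (a generic $2$-torsion $\eta$ will force $h^0(\vartheta\otimes\eta) = 0$ since $\vartheta$ is half-canonical of Clifford-generic type) and we need the first summand $\Sym^2 H^0(C,\vartheta) \to H^0(C,\omega_C)$ to be an isomorphism of $15$-dimensional spaces. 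Thus the proof reduces to two assertions: first, that $h^0(C,\vartheta\otimes\eta) = 0$ for suitable (in fact general) $\eta$; second, that the map $\Sym^2 H^0(C,\vartheta) \to H^0(C,\omega_C)$ is bijective. The second is exactly the statement that $C \subset \P^4$ is \emph{projectively normal in degree $2$ with respect to the half-canonical embedding and lies on no quadric}: the kernel of $\Sym^2 H^0(\vartheta) \to H^0(\vartheta^{\otimes 2})$ is $H^0(\P^4,\mathcal{I}_C(2))$, which vanishes by the hypothesis that $C$ lies on no quadric, so the map is injective; and since both sides have dimension $15$, injectivity implies bijectivity.

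**Order of steps.** First I would record $\dim\Sym^2 H^0(C,\vartheta) = \binom{5+1}{2} = 15 = g = h^0(C,\omega_C)$, fixing the numerology. Second, I would invoke the base-point-free pencil trick or simply the exact sequence $0 \to \mathcal{I}_C(2) \to \mathcal{O}_{\P^4}(2) \to \mathcal{O}_C(2) \to 0$ together with the identification of $H^0(\mathcal{O}_C(2))$ inside $H^0(\omega_C)$ to conclude that $\ker\bigl(\Sym^2 H^0(\vartheta) \to H^0(\omega_C)\bigr) = H^0(\P^4,\mathcal{I}_C(2)) = 0$, hence this restricted map is an isomorphism. (Here I use that the embedding $C\hookrightarrow\P^4$ is linearly normal, i.e.\ $H^0(\P^4,\mathcal{O}(1)) \xrightarrow{\sim} H^0(C,\vartheta)$, which holds because $h^0(\vartheta) = 5$ and the embedding is by the complete linear system.) Third, I would argue that for a generic choice of $\eta \in \Pic^0(C)[2]$ we have $H^0(C,\vartheta\otimes\eta) = 0$: the $2$-torsion points $\eta$ with $h^0(\vartheta\otimes\eta) > 0$ form a proper closed subset of $\Pic^0(C)[2]$ (indeed $\vartheta\otimes\eta$ is again a degree-$14$ line bundle, and $h^0 > 0$ for all $\eta$ would contradict that $\vartheta$ is an isolated point of $W^4_{14}$ — or more simply, $\vartheta \otimes \eta$ with $h^0 \geq 1$ lies in $W^0_{14}$, and a dimension count shows generic $\eta$ avoids the finitely many translates landing in the relevant Brill–Noether loci); alternatively one exhibits one such $\eta$ explicitly on the Pfaffian model. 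Fourth, combining: for such $\eta$ the left-hand side of $\mu$ is $\Sym^2 H^0(\vartheta) \oplus 0$, which maps isomorphically onto $H^0(\omega_C)$ by step two. This proves $\mu_{[C,\eta,\vartheta]}$ is bijective.

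**Main obstacle.** The genuine content is step two — verifying that the restricted map $\Sym^2 H^0(C,\vartheta) \to H^0(C,\omega_C)$ is an isomorphism — and for this everything hinges on the hypothesis "$C$ lies on no quadric hypersurface," which was already secured by the preceding lemma via the Pfaffian resolution and the vanishing $H^0(\P^5,\mathcal{I}_S(2)) = 0$. Given that input, step two is the short exact sequence argument above. The point requiring a little care is the bookkeeping about which $L$ is being used: the divisor $\MyDivisor$ is defined via $W^4_{16}$, so strictly one must either pass from $\vartheta \in W^4_{14}$ to $\vartheta(p+q) \in W^4_{16}$ for general $p,q$ (and check that adding two general base points preserves the relevant injectivity, which follows from the base-point-free pencil trick since the quadrics through $C$ still vanish), or observe that the divisorial computation is insensitive to this shift. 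I would handle the $\eta$-vanishing (step three) by a clean Brill–Noether/dimension count rather than an explicit construction, since only the \emph{existence} of one good pair $(\eta, L)$ is needed before invoking irreducibility of $\UnivGrdTors{4}{16}$ and semicontinuity.
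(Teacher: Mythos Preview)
Your plan contains a genuine gap: you prove the statement for the wrong $L$. You argue with $L=\vartheta\in W^4_{14}(C)$, for which the target $H^0(C,L^{\otimes 2})=H^0(C,\omega_C)$ is $15$-dimensional and the summand $\Sym^2 H^0(C,L\otimes\eta)$ can be made to vanish. But the lemma (and the divisor $\MyDivisor$) concerns $L\in W^4_{16}(C)$. The paper takes $L=\vartheta(2x)$ for a point $x\in C$; then $H^0(C,L)\cong H^0(C,\vartheta)$ still, but now $h^0(C,L^{\otimes 2})=18$, and crucially, even after choosing $\eta$ with $H^0(C,\vartheta\otimes\eta)=0$ one has $h^0(C,L\otimes\eta)=2$ by Riemann--Roch, so $\Sym^2 H^0(C,L\otimes\eta)$ is $3$-dimensional, not zero. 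Your proposed fix ``pass to $\vartheta(p+q)$ and check that adding two base points preserves injectivity'' addresses only the first summand and entirely overlooks this nonvanishing second summand; you also cannot appeal to ``the divisorial computation is insensitive to this shift,'' since the whole point of the lemma is to exhibit one honest triple in $\UnivGrdTors{4}{16}$.

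What the paper actually does, and what your outline is missing, is a vanishing-order analysis at the point $x$. Inside $H^0(C,L^{\otimes 2})$ one has the $15$-dimensional subspace $H^0(C,\omega_C)$, which coincides with the image of $\Sym^2 H^0(C,L)$ (this part you do have, via the no-quadric hypothesis); every section in this image vanishes to order at least $4$ at $x$ because $2x$ lies in the base locus of $|L|$. A complementary $3$-dimensional subspace $V$ is spanned by sections of $L^{\otimes 2}$ vanishing to orders $0,1,2$ at $x$. On the other hand the two sections of $L\otimes\eta=\vartheta(2x)\otimes\eta$ vanish to orders $0$ and $1$ at $x$ (since $H^0(\vartheta\otimes\eta)=0$), so their symmetric products vanish to orders $0,1,2$, and $\Sym^2 H^0(C,L\otimes\eta)$ maps isomorphically onto $V$. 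This order-filtration argument is the missing idea. A secondary point: your justification that some $\eta$ has $h^0(\vartheta\otimes\eta)=0$ via ``generic $\eta$'' is not sound as stated, since $\Pic^0(C)[2]$ is finite; the paper instead invokes Mumford's result that the $2$-torsion acts transitively on theta-characteristics and that some theta-characteristic has no sections.
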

\begin{proof}
  Set $\vartheta = \mathcal{O}_C(1)$.  Of course $\mathcal{O}_C(2) = \omega_C$.
  The fact that $C$ does not lie on a quadric hypersurface is equivalent to the bijectivity
  of the multiplication map
  \begin{equation*}
    \mu_\vartheta\colon \Sym^2 H^0(C, \vartheta) \rightarrow H^0(C, \omega_C).
  \end{equation*}
  We now choose any closed point $x\in C$.  Using that $\vartheta$ is very ample we get
  \begin{equation*}
    h^0(C, \vartheta(-2x)) = h^0(C, \vartheta) - 2.
  \end{equation*}
  By Serre duality this implies
  $h^0(C, \vartheta(2x)) = h^0(C, \vartheta)$.  Let $L = \vartheta(2x)$, so $L$
  is a complete $\mathfrak{g}^4_{16}$ and $2x$ is contained in the base locus of $L$.  In particular, we have
  $H^0(C, L) \cong H^0(C,\vartheta)$ and $|L| = |\vartheta| + 2x$.  Taking symmetric powers, we get
  \begin{equation*}
    \Sym^2 H^0(C,L) \cong \Sym^2 H^0(C, \vartheta) \cong H^0(C, \omega_C).
  \end{equation*}
  The space $H^0(C, L^{\otimes 2})$ is $18$-dimensional,
  and it decomposes via the natural inclusion $H^0(C, \vartheta^{\otimes 2}) \hookrightarrow H^0(C, L^{\otimes 2})$ as
  \begin{equation*}
    H^0(C, L^{\otimes 2}) \cong H^0(C, \omega_C) \oplus V \cong \Sym^2 H^0(C, L) \oplus V,
  \end{equation*}
  where $\dim V = 3$.  The sections in $\Sym^2 H^0(C, L)$ vanish to orders at least $4$ at $x$.
  By Riemann--Roch, the space $H^0(C, L^{\otimes 2})$ does contain sections vanishing to orders
  $0$, $1$ and $2$ at $x$.
\begin{dummyversion}
    (Since the base loci of $L^{\otimes 2}$, $L^{\otimes 2}(-x)$ and $L^{\otimes 2}(-2x)$
    are empty).
\end{dummyversion}
  By the previous analysis, they must span $V$.

  Choose a two-torsion bundle $\eta\in \Pic^0(C)[2]$ such that $H^0(C, \vartheta \otimes \eta) = 0$.
  Since $\Pic^0(C)[2]$ acts transitively on the theta-characteristics, such an $\eta$ always exists
  by a result of Mumford \autocite{Mum66}.
  Then we have
  \begin{equation*}
    h^0(C, L \otimes \eta) = h^0(C, \vartheta(2x) \otimes \eta) \leq h^0(C, \vartheta \otimes \eta) + 2 = 2.
  \end{equation*}
  By Riemann--Roch we must in fact have $h^0(C, L\otimes \eta) = 2$.  By construction,
  \[H^0(C, (L\otimes \eta)(-2x)) = H^0(C, \vartheta\otimes\eta) = 0,\]
  so the two sections of $L\otimes \eta$ vanish to orders $0$ and $1$ at $x$.
  We conclude that the map
  \begin{equation*}
    \Sym^2 H^0(C, L \otimes \eta) \rightarrow H^0(C, L^{\otimes 2})
  \end{equation*}
  is injective and its image is precisely $V$.
\end{proof}

\subsection{Irreducibility of some spaces of linear series}
\label{subsec:grd-irreducible}

We now want to prove the irreducibility of $\UnivGrdTors{4}{16}$, i.e., the moduli space
of triples $[C,\eta,L]$ where $[C,\eta]\in\Rg{15}$ and $L\in W^4_{16}(C)$.
This will show that for the
general triple $[C, \eta, L]$, the map $\mu_{[C,\eta,L]}$ is an isomorphism.  Notice that the pair $[C,L]$
constructed in Section \ref{subsec:curve-example}
is \emph{not} Petri general, so we need more than the existence of a unique component
of $\UnivGrdTors{4}{16}$ dominating $\Mg{15}$.
Nonetheless, this fact is what we are going to establish first in greater generality:
\begin{proposition}
  \label{prop:unique-irreducible-component}
  Let $g \geq 3$ and $\rho(g,r,d) = 0$.  Then there is a unique irreducible component
  of $\mathfrak{G}^{r,(2)}_d$ dominating $\Mg{g}$,
  i.e., containing the Petri general triple $[C,\eta,L]$.
\end{proposition}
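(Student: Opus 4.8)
The plan is to exploit the fibered nature of $\UnivGrdTors{r}{d}$ and to reduce the statement to a monodromy computation. A triple $[C,\eta,L]$ is the same datum as a pair of points $[C,L]\in\UnivGrd{r}{d}$ and $[C,\eta]\in\Rg{g}$ lying over one and the same curve, so there is a natural identification $\UnivGrdTors{r}{d}=\UnivGrd{r}{d}\times_{\Mg{g}}\Rg{g}$. Choose a dense open $U\subseteq\Mg{g}$ of Petri general curves; since $\rho(g,r,d)=0$, the map $\UnivGrd{r}{d}\to\Mg{g}$ restricts to a finite étale cover of degree $N$ over $U$, and $\Rg{g}\to\Mg{g}$ is finite étale of degree $2^{2g}-1$. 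Every component of $\UnivGrdTors{r}{d}$ dominating $\Mg{g}$ maps onto a dominating component of $\UnivGrd{r}{d}$, so it suffices to show that the finite étale cover $\UnivGrdTors{r}{d}|_U\to U$ is connected; the number of its components equals the number of orbits of $G=\pi_1(U)$ acting diagonally on $A\times B$, where $A$ is the set of $N$ linear series and $B=\Pic^0(C)[2]\setminus\{0\}$ the set of nonzero two-torsion points.

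For the factor $A$ I invoke the theorem of Eisenbud and Harris \autocite{EH89}: when $\rho=0$, the monodromy of the $\mathfrak g^r_d$ is the \emph{full} symmetric group $S_N$. In particular $G$ surjects onto $S_N$ and acts transitively on $A$, which recovers the uniqueness of the dominating component of $\UnivGrd{r}{d}$. For the factor $B$, the monodromy preserves the Weil pairing and its image is the full symplectic group $\mathrm{Sp}(2g,\F_2)$, because the mapping class group surjects onto $\mathrm{Sp}(2g,\ZZ)$ and hence onto $\mathrm{Sp}(2g,\F_2)$; the latter acts transitively on the nonzero vectors of $\F_2^{2g}$.

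The final step is to upgrade these two surjections to a surjection of $G$ onto the product $S_N\times\mathrm{Sp}(2g,\F_2)$, for then the diagonal action on $A\times B$ is transitive and we are done. I would apply Goursat's lemma: the image of $G$ projects onto each factor, hence is the fibre product of the two groups over a common quotient $Q$. Since $g\geq 3$, the group $\mathrm{Sp}(2g,\F_2)$ is simple and nonabelian, so $Q$ is either trivial or the whole of $\mathrm{Sp}(2g,\F_2)$; the latter would require a surjection $S_N\twoheadrightarrow\mathrm{Sp}(2g,\F_2)$, which cannot exist, as $|S_N|<|\mathrm{Sp}(2g,\F_2)|$ for $N\leq 4$, while for $N\geq 5$ the only quotients of $S_N$ are $S_N$, $\ZZ/2$ and the trivial group, none of them the nonabelian simple group $\mathrm{Sp}(2g,\F_2)$. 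Thus $Q$ is trivial, the action on $A\times B$ is transitive, and a single dominating component results; it necessarily contains every Petri general triple.

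The one genuinely substantial ingredient is the full symmetric group monodromy of linear series for $\rho=0$; by contrast the symplectic monodromy is classical and the Goursat bookkeeping is purely formal once $g\geq 3$ makes $\mathrm{Sp}(2g,\F_2)$ simple. The only other point needing care is the identification $\UnivGrdTors{r}{d}=\UnivGrd{r}{d}\times_{\Mg{g}}\Rg{g}$ together with the passage from connected components to $G$-orbits, both of which are routine over the Petri general locus, where all $N$ series are honest and $\Rg{g}\to\Mg{g}$ is étale.
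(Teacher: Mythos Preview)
Your argument is correct and takes a genuinely different route from the paper's. The paper argues geometrically: setting $k=g-d+r+1$, it constructs a rational map $\Psi\colon\UnivGrdTors{1}{k}\dashrightarrow\UnivGrdTors{r}{d}$ by $[D,\eta,A]\mapsto[D,\eta,A^{\otimes r}]$, verifies via the base-point-free pencil trick and a result of Ballico that $A^{\otimes r}$ is Petri general on a general $k$-gonal curve, and then invokes the Biggers--Fried irreducibility of $\UnivGrdTors{1}{k}$ to conclude that the image of $\Psi$ is irreducible and contains a full fibre of $U^{(2)}\to U$; combined with the connectedness of the Petri-general locus $U\subset\UnivGrd{r}{d}$ this forces $U^{(2)}$ to be connected. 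You instead compute the monodromy of $\UnivGrdTors{r}{d}|_U\to U$ directly as the full product $S_N\times\mathrm{Sp}(2g,\F_2)$ via Goursat's lemma and the simplicity of $\mathrm{Sp}(2g,\F_2)$ for $g\geq 3$. Your approach is shorter and more conceptual, avoiding the detour through $k$-gonal curves and the pencil-trick computation; on the other hand it uses the \emph{full} symmetric group monodromy of Eisenbud--Harris (which is in \autocite{EH87}, not \autocite{EH89}), whereas the paper only needs the weaker transitivity statement plus the Biggers--Fried result, the latter already packaging the symplectic monodromy together with the classical Hurwitz-space irreducibility.
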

\begin{proof}
  If $r = g - 1$, the only $\mathfrak{g}^r_d$ on a curve is the canonical bundle,
  so $\UnivGrdTors{r}{d} \cong \Rg{g}$ is irreducible.  Otherwise, set $k = g - d + r + 1 \geq 3$.
  We recall that the locus of Petri general pairs $[C,L]$ is a connected smooth open subset $U$
  of one irreducible component of $\UnivGrd{r}{d}$ \autocite{EH87}. 
  The restriction of $\UnivGrdTors{r}{d}$ to the preimage $U^{(2)}$ of $U$
  is smooth, so in order to show $U^{(2)}$ is irreducible we only have to show it is connected.

  Take a general $k$-gonal curve $[D,A]$.
  We then have $h^0(D, A^{\otimes l}) = l + 1$ for all $l \leq r+1$ (see \autocite{Ba89}).
  So there is a rational map
  \begin{center}
    \begin{tikzpicture}
      \node (A) { $\Psi\colon \UnivGrdTors{1}{k}$ };
      \node[right=0.6cm of A] (B) { $\UnivGrdTors{r}{d}$ };
      \draw[dashed,->] (A) -- (B);
    \end{tikzpicture}
  \end{center}
  defined by $[D,\eta,A] \mapsto [D,\eta,A^{\otimes r}]$.
  We claim $A^{\otimes r}$ is Petri general, i.e., the map
  \begin{equation*}
    \mu_{A^{\otimes r}} \colon H^0(D, A^{\otimes r}) \otimes H^0(D, \omega_D \otimes A^{\otimes (-r)})
    \rightarrow H^0(D, \omega_D)
  \end{equation*}
  is injective. 
  The aforementioned result of Ballico implies
  \begin{equation*}
    h^0(D, \omega_D \otimes A^{\otimes (-j)}) = (k-1)(r+1-j)
  \end{equation*}
  for all $j\leq r+1$.
  Note also that $g = (k-1)(r+1)$.
  By counting dimensions we find that $\mu_{A^{\otimes r}}$ is injective if and only if it is surjective.
  
  We write down the beginning of the long exact sequence coming from the base point free pencil trick:
  \begin{equation*}
    0 \rightarrow H^0(\omega_D \otimes A^{\otimes (-j-1)})
    \rightarrow H^0(A) \otimes H^0(\omega_D \otimes A^{\otimes (-j)})
    \rightarrow H^0(\omega_D \otimes A^{\otimes (-j+1)}).
  \end{equation*}
  Comparing dimensions we find that the map on the right is surjective for all $j \leq r$.
  Now note that $h^0(D, A^{\otimes r}) = r+1$ is equivalent to $H^0(D, A^{\otimes r}) \cong \Sym^r H^0(D, A)$.
  The chain of surjective maps
  \begin{align*}
    H^0(A)^{\otimes r} \otimes H^0(\omega_D \otimes A^{\otimes(-r)})
    & \twoheadrightarrow
      H^0(A)^{\otimes(r-1)} \otimes H^0(\omega_D \otimes A^{\otimes(-r+1)}) \twoheadrightarrow \dotsb\\
    \dotsb & \twoheadrightarrow H^0(A) \otimes H^0(\omega_D \otimes A^{-1})
  \end{align*}
  then implies that the Petri map
  \begin{equation*}
    \mu_{A^{\otimes r}} \colon \Sym^r H^0(D, A) \otimes H^0(D, \omega_D \otimes A^{\otimes (-r)})
    \rightarrow H^0(D, \omega_D)
  \end{equation*}
  is surjective as well.  So $[D,\eta,A^{\otimes r}]$ lies in $U^{(2)}$.
  
  In \autocite{BF86} it is shown that the Hurwitz space $\UnivGrdTors{1}{k}$ is irreducible for $k\geq 3$.
  Hence $\Psi$ maps to the smooth locus of a unique component $Z$ of $\UnivGrdTors{r}{d}$ and its
  image is an irreducible subset consisting generically of Petri general curves.  Since the image is closed
  under monodromy of $2$-torsion, it follows that $U^{(2)}$ must be connected.
\end{proof}
We employ this result to prove irreducibility of $\UnivGrdTors{r}{d}$ under
special circumstances:
\begin{corollary}
  Assume $g\geq 3$ and $\rho(g,r,d) = 0$.  If $r \leq 2$ 
  or $r' = g - d + r - 1 \leq 2$, then $\UnivGrdTors{r}{d}$ is irreducible.
\end{corollary}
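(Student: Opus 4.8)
The goal is to deduce irreducibility of $\UnivGrdTors{r}{d}$ from Proposition \ref{prop:unique-irreducible-component}, which already hands us a single distinguished component $Z$ dominating $\Mg{g}$. The strategy is to show that under either hypothesis ($r \leq 2$ or $r' = g-d+r-1 \leq 2$) there simply cannot be any \emph{other} component — in other words, every irreducible component of $\UnivGrdTors{r}{d}$ must dominate $\Mg{g}$, and then by Proposition \ref{prop:unique-irreducible-component} there is only one such. The plan is therefore to analyze the image in $\Mg{g}$ of an arbitrary component $Z'$ and rule out the possibility that it is a proper subvariety.

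First I would use the standard structure theory of the universal $\mathfrak{g}^r_d$ variety. Since the forgetful map $\UnivGrd{r}{d} \to \Mg{g}$ has, by the existence half of Brill--Noether theory, all fibers nonempty when $\rho(g,r,d) = 0$, and since $W^r_d(C)$ has dimension $\geq \rho = 0$ everywhere, every component of $\UnivGrd{r}{d}$ has dimension $\geq 3g-3$. The map $\UnivGrdTors{r}{d} \to \UnivGrd{r}{d}$ is finite étale of degree $2^{2g}-1$ (a choice of nontrivial $2$-torsion point), so the same dimension bound $\dim Z' \geq 3g-3$ holds for every component $Z'$ of $\UnivGrdTors{r}{d}$. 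Now suppose $Z'$ does not dominate $\Mg{g}$; then its image $\MgSymbol'$ in $\Mg{g}$ has dimension $\leq 3g-4$, and the fiber of $Z' \to \MgSymbol'$ over a general point $[C]\in\MgSymbol'$ has dimension $\geq 1$. This means the general curve in $\MgSymbol'$ carries a \emph{positive-dimensional} family of $\mathfrak{g}^r_d$'s, i.e.\ $\dim W^r_d(C) \geq 1 > \rho(g,r,d)$.

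The heart of the argument is then to invoke the results on the dimension of $W^r_d(C)$ for curves with excess Brill--Noether behavior — specifically the bounds (due to work of Eisenbud--Harris, Coppens--Martens, and in low-$r$ or low-$r'$ cases going back to Mumford) saying that if $\dim W^r_d(C) \geq \rho(g,r,d)+1$ on a general curve of an $(3g-4)$-dimensional locus, then $C$ is forced to be special — for $r\leq 2$ this pushes one into the hyperelliptic, trigonal or plane-curve loci, which are too small, and dually for $r' \leq 2$ one applies the same analysis to the residual series $\omega_C \otimes L^{-1} \in W^{r'}_{d'}(C)$ with $d' = 2g-2-d$, reducing to the $r \leq 2$ case. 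In each case the locus $\MgSymbol'$ would have to be contained in a Brill--Noether-type subvariety of $\Mg{g}$ of dimension strictly less than $3g-4$ — indeed, on the general curve of \emph{any} divisor in $\Mg{g}$ the Brill--Noether varieties still have the expected dimension (the Gieseker--Petri divisor and its relatives being the obstruction), giving a contradiction. Hence $Z'$ dominates $\Mg{g}$, and uniqueness from Proposition \ref{prop:unique-irreducible-component} finishes the proof.

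\textbf{Main obstacle.} The delicate point is the last step: controlling $\dim W^r_d(C)$ not for a general curve but for the general curve of an arbitrary codimension-one (or higher) subvariety of $\Mg{g}$, uniformly in $r$ and $d$ subject only to $\rho = 0$ and $r \leq 2$ (or $r' \leq 2$). One must either cite a sufficiently strong form of the excess linear series estimates — e.g.\ that the locus of curves with $\dim W^r_d \geq 1$ has codimension $\geq 2$ when $\rho = 0$ and $r$ or $r'$ is small, which is exactly where the hypothesis $r\leq 2$ or $r'\leq 2$ is used — or argue directly via the geometry of the associated maps (to $\PP^1$, $\PP^2$, or by residuation). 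Getting the quantitative codimension bound right, and handling the boundary between the $r$ and $r'$ cases cleanly, is where the real work lies; everything else is dimension bookkeeping built on Proposition \ref{prop:unique-irreducible-component}.
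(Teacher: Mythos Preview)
Your strategy and the paper's agree on the endgame (every component of $\UnivGrdTors{r}{d}$ contains Petri-general triples, then invoke Proposition~\ref{prop:unique-irreducible-component}), but the paper reaches that point by a much more direct route and avoids exactly the obstacle you flag.

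The paper does \emph{not} attempt to control the locus in $\Mg{g}$ where $\dim W^r_d(C)\geq 1$. Instead it first proves that the base $\UnivGrd{r}{d}$ (no torsion) is itself irreducible, case by case: for $r=0$ it is trivially $\Mg{g}$; for $r=1$ it is the Hurwitz space, irreducible by Biggers--Fried; for $r=2$ there is a dominant rational map from the Severi variety $V^{d,g}$ of irreducible nodal plane curves, and Harris's theorem that $V^{d,g}$ is irreducible forces $\UnivGrd{2}{d}$ to be irreducible. The case $r'\leq 2$ is handled by Serre duality $L\mapsto \omega_C\otimes L^{-1}$, which identifies $\UnivGrdTors{r}{d}$ with $\UnivGrdTors{r'}{2g-2-d}$. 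Once $\UnivGrd{r}{d}$ is known to be irreducible of dimension $3g-3$, the finite \'etale cover $\UnivGrdTors{r}{d}\to\UnivGrd{r}{d}$ forces every component of $\UnivGrdTors{r}{d}$ to dominate $\UnivGrd{r}{d}$, hence to contain Petri-general points, and Proposition~\ref{prop:unique-irreducible-component} finishes.

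Your plan, by contrast, needs the statement that for $\rho=0$ and $r\leq 2$ the locus $\{[C]\in\Mg{g}\mid \dim W^r_d(C)\geq k\}$ has codimension at least $k+1$ for every $k\geq 1$ (otherwise a component sitting over a codimension-$k$ locus with $k$-dimensional fibers would have dimension $3g-3$ and survive). This is genuinely delicate: failure of Petri is only a divisorial condition, so one must separate ``$W^r_d(C)$ non-reduced at a point'' from ``$W^r_d(C)$ positive-dimensional,'' and the references you gesture at (Eisenbud--Harris, Coppens--Martens, Mumford) do not give this uniformly. In effect your argument reproves irreducibility of $\UnivGrd{r}{d}$ by an excess-dimension count in $\Mg{g}$, which is strictly harder than quoting the Hurwitz/Severi irreducibility theorems. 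The paper's approach is both shorter and complete; yours would require substantial additional input precisely at the point you identify as the main obstacle.
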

\begin{proof}
  Note that the Serre dual of a $\mathfrak{g}^r_d$ is a $\mathfrak{g}^{r'}_{2g - 2 - d}$,
  so the space $\UnivGrdTors{r}{d}$ is irreducible if and only if $\UnivGrdTors{r'}{2g-2-d}$ is.
  As mentioned above, if $r = 0$ or, equivalently, $r' = g-1$, the unique $\mathfrak{g}^r_d$
  on a curve is its canonical bundle, so $\UnivGrdTors{r}{d} \cong \Rg{g}$ is irreducible.
  The case $r = 1$ is just the aforementioned result by Biggers and Fried \autocite{BF86}
  about the irreducibility of Hurwitz spaces.

  In the remaining case $r = 2$ a general $\mathfrak{g}^2_d$
  maps $C$ birationally to a nodal curve in $\P^2$.  Thus we get a dominant rational map
  \begin{center}
    \begin{tikzpicture}
      \node (A) { $\strut V^{d,g}$ };
      \node[right=0.6cm of A] (B) { $\strut\UnivGrd{2}{d}$ };
      \draw[dashed,->] (A) -- (B);
    \end{tikzpicture}
  \end{center}
  from the Severi variety $V^{d,g}$ of irreducible plane curves of degree $d$ and arithmetic genus $g$.
  The Severi varieties are irreducible, as proven in \autocite{Har84}, so $\UnivGrd{2}{d}$
  is irreducible as well.

  Étale maps preserve dimension, so all components of $\UnivGrdTors{2}{d}$
  have dimension $3g - 3 + \rho(g,r,d) = 3g - 3$.  Each component is generically smooth,
  which implies that the general element has injective Petri map.  But by
  Proposition \ref{prop:unique-irreducible-component} there is only one such component.
\end{proof}
\begin{dummyversion}
  In order to see that the general $\mathfrak{g}^r_d$ maps $C$ birationally
  to a nodal curve in $\P^2$ we have to carry out a dimension count for
  components where the general element is composed with an involution.
  For instance, the dimension of components contributed by hyperelliptic
  curves does not exceed $3g - 4$ and that says it all.
\end{dummyversion}
In particular, $\UnivGrdTors{4}{16}$ is irreducible.
We may therefore specialize a general triple $[C, \eta, L] \in \UnivGrdTors{4}{16}$ to the previously
constructed explicit example.
This proves that the locus $\MyDivisorBar$ is a genuine divisor.  We proceed to calculate its class.

\subsection{Calculation of the divisor class}

Recall that we are considering the morphism
\begin{equation*}
  \degenmorph\colon \Sym^2 \UnivCrdTorsMorph_\ast (\Poincarebundle) \oplus
  \Sym^2 \UnivCrdTorsMorph_{\ast} (\Poincarebundle \otimes \PrymPoincare)
  \rightarrow \UnivCrdTorsMorph_\ast (\Poincarebundle^{\otimes 2})
\end{equation*}
between vector bundles of the same rank.  To calculate the Chern classes of these bundles
we will employ Grothendieck--Riemann--Roch.  For this we study the contribution
coming from $R^1 \UnivCrdTorsMorph_{\ast} (\Poincarebundle \otimes \PrymPoincare)$.
\begin{lemma}
  Let $[C,\eta] \in \Delta_0''$ be general and $L \in W^4_{16}(C)$.
  Then $h^0(C, L \otimes \eta) = 4$.
\end{lemma}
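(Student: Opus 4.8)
The plan is to pull everything back to the normalization. Write $C = \normaliz{C}/(p\sim q)$ with $\nu\colon\normaliz{C}\to C$ the normalization and $p,q$ the two preimages of the node; for a general point of $\Delta_0''$ the curve $\normaliz{C}$ is a general, hence Brill--Noether general, curve of genus $14$, and $\eta$ is the Wirtinger bundle, so $\nu^\ast\eta\cong\mathcal{O}_{\normaliz{C}}$. Set $M = \nu^\ast L$, a line bundle of degree $16$ on $\normaliz{C}$; then also $\nu^\ast(L\otimes\eta)\cong M$. After fixing trivializations of $M$ near $p$ and $q$, the bundle $L$ is obtained from $M$ by a gluing under which global sections satisfy $s(q) = c\,s(p)$ for some $c\in\CC^\ast$, while $L\otimes\eta$ corresponds to the opposite gluing $s(q) = -c\,s(p)$. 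Pullback along $\nu$ is injective on global sections and identifies $H^0(C,L)$ and $H^0(C,L\otimes\eta)$ with the subspaces of $H^0(\normaliz{C},M)$ cut out by these two linear conditions.

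Next I would bring in Brill--Noether theory on $\normaliz{C}$. Since $\rho(14,5,16) = 14 - 6\cdot 3 = -4 < 0$ we have $W^5_{16}(\normaliz{C}) = \emptyset$, so $h^0(\normaliz{C},M)\leq 5$. On the other hand $L\in W^4_{16}(C)$ gives $h^0(C,L)\geq 5$, and the injection $H^0(C,L)\hookrightarrow H^0(\normaliz{C},M)$ then forces $h^0(C,L) = h^0(\normaliz{C},M) = 5$ and $H^0(C,L) = H^0(\normaliz{C},M)$. In particular \emph{every} section $s$ of $M$ satisfies $s(q) = c\,s(p)$; substituting this into the gluing condition defining $H^0(C,L\otimes\eta)$ gives $c\,s(p) = -c\,s(p)$, i.e.\ $s(p) = 0$, so $H^0(C,L\otimes\eta) = H^0\big(\normaliz{C},M(-p)\big)$. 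Finally $p$ cannot be a base point of $M$, for otherwise $M(-p)$ would be a $\mathfrak{g}^4_{15}$ on $\normaliz{C}$, contradicting $W^4_{15}(\normaliz{C}) = \emptyset$, which holds since $\rho(14,4,15) = 14 - 5\cdot 3 = -1 < 0$. Hence $h^0(C,L\otimes\eta) = h^0(\normaliz{C},M) - 1 = 4$.

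The delicate point is the bookkeeping in the first step: one must verify that for a Wirtinger cover the gluings defining $L$ and $L\otimes\eta$ differ precisely by the sign $c\mapsto -c$, and that $H^0(C,\,\cdot\,)$ is correctly identified with the kernel of the functional $s\mapsto s(q)\mp c\,s(p)$ on $H^0(\normaliz{C},M)$. Everything after that rests only on the two Brill--Noether inequalities above; in particular no genericity of the pair $(p,q)$ is needed, only that $\normaliz{C}$ be Brill--Noether general.
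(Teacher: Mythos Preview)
Your argument is correct and follows essentially the same route as the paper: pass to the normalization, use $\nu^\ast\eta\cong\mathcal{O}_{\normaliz{C}}$, and compare $H^0(C,L)$ and $H^0(C,L\otimes\eta)$ as subspaces of $H^0(\normaliz{C},\nu^\ast L)$ via the short exact sequence $0\to L\otimes\eta\to\nu_\ast\nu^\ast L\to (L\otimes\eta)|_x\to 0$. The paper phrases the last step abstractly (``$H^0(e)$ is the zero map, hence $H^0(e')$ must be nonzero''), whereas you unpack the gluing data explicitly; your additional observations that $W^5_{16}(\normaliz{C})=\emptyset$ forces $h^0(\normaliz{C},M)=5$ exactly, and that $W^4_{15}(\normaliz{C})=\emptyset$ rules out $p$ being a base point of $M$, make precise the Brill--Noether input the paper invokes only implicitly.
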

\begin{proof}
  Let $\nu\colon \normaliz{C} \rightarrow C$ be the normalization of $C$ and $x$ be the node.
  Then $\nu^\ast \eta = \mathcal{O}_{\normaliz{C}}$ and $\nu^\ast L \in W^4_{16}(\normaliz{C})$,
  since $\normaliz{C}$ is Brill--Noether general.
  From the exact sequence
  \begin{equation*}
    0 \rightarrow \mathcal{O}_C \rightarrow \nu_\ast \mathcal{O}_{\widetilde{C}} \xrightarrow{e} \CC_x \rightarrow 0
  \end{equation*}
  we get
  \begin{equation*}
    0 \rightarrow L\otimes \eta \rightarrow \nu_\ast \nu^\ast L \xrightarrow{e'} L\otimes \eta|_x \rightarrow 0,
  \end{equation*}
  and by the long exact sequence in cohomology we obtain
  \begin{equation*}
    0 \rightarrow H^0(C, L\otimes \eta) \rightarrow H^0(\normaliz{C}, \nu^\ast L) \xrightarrow{H^0(e')} \CC.
  \end{equation*}
  Now $H^0(e)$ is the zero map, hence $H^0(e')$ must be nonzero and we get
  \begin{equation*}
    h^0(C, L\otimes \eta) = h^0(\normaliz{C}, \nu^\ast L) - 1 = 4.\qedhere
  \end{equation*}
\end{proof}
This implies that the dimension of $h^0(C, L \otimes \eta)$ jumps by two on the
boundary component $\Delta_0''$.  Hence $R^1 \UnivCrdTorsMorph_\ast ( \Poincarebundle \otimes \PrymPoincare )$
is supported at least on $\Delta_0''$, and there it is of rank $2$.
\begin{remark}
  In fact, $\Delta_0''$ seems to be the only divisor where
  $R^1 \UnivCrdTorsMorph_\ast( \Poincarebundle \otimes \PrymPoincare )$
  is supported.  Since a proof of this would take long, and is not strictly necessary
  to achieve the goal of the article, we do not assume this fact here and will discuss
  it in greater generality in future work.
\end{remark}
Denote $\mathfrak{d} = c_1 ( R^1 \UnivCrdTorsMorph_\ast ( \Poincarebundle \otimes \PrymPoincare ) )$.
\begin{proposition}
  The pushforward to $\RgSuit{15}$ of the class of the degeneracy locus
  $Z_1(\DegenMorph)$ is
  \begin{equation*}
    [\MyDivisorBar]^{\mathrm{virt}} \equiv
    31020 \left( \frac{3127}{470} \lambda - (\delta_0' + \delta_0'') - \frac{3487}{1880} \dram \right)
    - 3 \UnivGrdTorsMorph_\ast( \mathfrak{d} )
  \end{equation*}
  and $[\MyDivisorBar]^{\mathrm{virt}} - n[\MyDivisorBar]$ is an effective class
  supported on the boundary for some $n \geq 1$.
\end{proposition}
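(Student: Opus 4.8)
The plan is to compute the class of $Z_1(\degenmorph)$ by the Thom--Porteous formula, to evaluate the first Chern classes that appear by Grothendieck--Riemann--Roch on the universal curve, and then to read off the difference between the virtual and the genuine class of $\MyDivisorBar$ as an effective boundary cycle.

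Since $\degenmorph$ is a morphism of vector bundles of equal rank whose first degeneracy locus has, by Sections~\ref{subsec:curve-example}--\ref{subsec:grd-irreducible}, the expected codimension one, the Thom--Porteous formula gives
\[
  [Z_1(\degenmorph)] \equiv
  c_1\big(\UnivCrdTorsMorph_\ast\Poincarebundle^{\otimes 2}\big)
  - c_1\big(\Sym^2\UnivCrdTorsMorph_\ast\Poincarebundle\big)
  - c_1\big(\Sym^2\UnivCrdTorsMorph_\ast(\Poincarebundle\otimes\PrymPoincare)\big)
\]
in $\Pic(\GrdSuit{4}{16})$, which by $c_1(\Sym^2 V)=(\rk V+1)\,c_1(V)$ and the ranks $5$ and $2$ of the two source summands equals $c_1(\UnivCrdTorsMorph_\ast\Poincarebundle^{\otimes 2}) - 6\,c_1(\UnivCrdTorsMorph_\ast\Poincarebundle) - 3\,c_1(\UnivCrdTorsMorph_\ast(\Poincarebundle\otimes\PrymPoincare))$.

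I would then apply Grothendieck--Riemann--Roch to $\UnivCrdTorsMorph$, retaining the correction term at the nodes of the fibres over $\Delta_0'$, $\Delta_0''$ and $\Dram$; this reduces the computation to relative pushforwards of the form $\UnivCrdTorsMorph_\ast\big(c_1(\Poincarebundle)^a c_1(\PrymPoincare)^b c_1(\omega_{\UnivCrdTorsMorph})^c\big)$ with $a+b+c=2$. The pure $\omega_{\UnivCrdTorsMorph}$-term produces $\lambda$ and the boundary classes; the terms containing $c_1(\PrymPoincare)$ are supplied by Lemma~\ref{lem:pushforward-dram}, which gives $\UnivCrdTorsMorph_\ast c_1^2(\PrymPoincare)=-\dram/2$ and $\UnivCrdTorsMorph_\ast(c_1(\PrymPoincare)c_1(\omega_{\UnivCrdTorsMorph}))=0$; and the remaining contributions, which are of Brill--Noether type -- including the first Chern class of the non-acyclic bundle $\UnivCrdTorsMorph_\ast\Poincarebundle$, for which $h^1(C,L)=3$ -- are read off, after pushing down by $\UnivGrdTorsMorph$, from the computations of \autocite{F09} for $\rho(15,4,16)=0$ and $N=6006$, using the projection formula and the relation $\RgForgetfulMap^\ast\delta_0=\delta_0'+\delta_0''+2\dram$. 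The genuinely delicate point is the non-acyclicity of $\UnivCrdTorsMorph_\ast(\Poincarebundle\otimes\PrymPoincare)$: by the preceding lemma $h^0(C,L\otimes\eta)$ jumps from $2$ to $4$ over $\Delta_0''$, so $R^1\UnivCrdTorsMorph_\ast(\Poincarebundle\otimes\PrymPoincare)$ is a torsion sheaf supported at least on $\Delta_0''$; since Grothendieck--Riemann--Roch computes the class of the virtual pushforward $\UnivCrdTorsMorph_!$, one has $c_1(\UnivCrdTorsMorph_\ast(\Poincarebundle\otimes\PrymPoincare)) = c_1(\UnivCrdTorsMorph_!(\Poincarebundle\otimes\PrymPoincare)) + \mathfrak{d}$, and the factor $3$ (the rank plus one) from the symmetric square turns this into the term $-3\,\UnivGrdTorsMorph_\ast(\mathfrak{d})$ of the stated formula. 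Assembling the numerical contributions and pushing down by $\UnivGrdTorsMorph$ then produces the coefficients of $\lambda$, $(\delta_0'+\delta_0'')$ and $\dram$.

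For the second assertion, $[\MyDivisorBar]^{\virt}$ is by definition $\UnivGrdTorsMorph_\ast[Z_1(\degenmorph)]$, which is an effective cycle. By Section~\ref{subsec:curve-example} the restriction of $Z_1(\degenmorph)$ to the interior $\UnivGrdTors{4}{16}$ is a nonempty divisor mapping generically finitely onto $\MyDivisorBar$, of some degree $n\geq 1$, while every remaining component of $Z_1(\degenmorph)$ lies over the boundary $\Delta_0'\cup\Delta_0''\cup\Dram$. Hence, as cycles, $\UnivGrdTorsMorph_\ast[Z_1(\degenmorph)] = n[\MyDivisorBar] + E$ with $E$ effective and supported on the boundary, which is the claim. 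The main obstacle is the Grothendieck--Riemann--Roch bookkeeping of the preceding paragraph -- the node corrections and the separation of the honest Chern classes from the virtual ones -- compounded by the fact that the support of $R^1\UnivCrdTorsMorph_\ast(\Poincarebundle\otimes\PrymPoincare)$ is not determined; this last difficulty is circumvented precisely by carrying $\mathfrak{d}$ symbolically and absorbing it, in the final step, into the effective class $E$.
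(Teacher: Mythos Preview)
Your proposal is correct and follows essentially the same route as the paper: Porteous' formula, the identity $c_1(\Sym^2 V)=(\rk V+1)c_1(V)$, Grothendieck--Riemann--Roch together with Lemma~\ref{lem:pushforward-dram} for the $\PrymPoincare$-terms, the $\mathfrak{d}$-correction coming from the non-acyclicity of $\Poincarebundle\otimes\PrymPoincare$, and the pushforward formulas from \autocite{F09}. Your treatment of the second assertion (splitting $Z_1(\degenmorph)$ into the interior component mapping with degree $n$ onto $\MyDivisorBar$ plus boundary-supported components) is in fact more explicit than the paper's one-line remark.
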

\begin{proof}
  We introduce the following classes in $A^1(\UnivGrdTorsPCZero{4}{16})$:
  \begin{equation*}
    \mathfrak{a} = \UnivCrdTorsMorph_\ast(c_1^2(\Poincarebundle)),\quad
    \mathfrak{b} = \UnivCrdTorsMorph_\ast(c_1(\Poincarebundle)\cdot c_1(\omega_\UnivCrdTorsMorph)),\quad
    \mathfrak{c} = c_1(\UnivCrdTorsMorph_\ast(\Poincarebundle)).
  \end{equation*}
  By Porteous' formula, the class of the first degeneracy locus $Z_1(\degenmorph)$ of $\degenmorph$ is given by
  \begin{equation*}
    Z_1(\degenmorph) = c_1( \concreteF )
    - c_1( \Sym^2 \UnivCrdTorsMorph_\ast \Poincarebundle )
    - c_1( \Sym^2 \UnivCrdTorsMorph_\ast (\Poincarebundle \otimes \PrymPoincare) ).
  \end{equation*}
  For a vector bundle $\mathcal{G}$ we have the elementary fact
  \begin{equation*}
    c_1( \Sym^2 \mathcal{G} ) = (\rk( \mathcal{G} ) + 1 ) c_1(\mathcal{G}).
  \end{equation*}
  Furthermore, for every $[C, \eta] \in \RgSuit{g}$ and every $L \in W^4_{16}(C)$ we have $H^1(C, L^{\otimes 2}) = 0$,
  so $R^1 \UnivCrdTorsMorph_\ast ( \Poincarebundle^{\otimes 2} ) = 0$.
  We can then apply Grothendieck--Riemann--Roch and express everything
  in terms of the classes $\mathfrak{a}$, $\mathfrak{b}$, $\mathfrak{c}$ and $\mathfrak{d}$.
  For instance we have
  \begin{align*}
    c_1( \concreteF )
    & = \Big[ \UnivCrdTorsMorph_\ast 
      \left(1 + c_1( \Poincarebundle^{\otimes 2} ) + \tfrac{1}{2}c_1^2 (\Poincarebundle^{\otimes 2})\right)\\
    &
      ~~~~~~~~~~~~\cdot \left(1 - \tfrac{1}{2}c_1(\omega_\UnivCrdTorsMorph)
      + \tfrac{1}{12}(c_1^2(\omega_\UnivCrdTorsMorph) + c_2(\Omega_\UnivCrdTorsMorph))\right)
      \Big]_1\\
    & = \lambda + 2\mathfrak{a} - \mathfrak{b},
  \end{align*}
  where $[-]_1$ denotes the degree $1$ part of an expression.  We have used Mumford's formula
  to calculate
  $\UnivCrdTorsMorph_\ast (c_1^2(\omega_\UnivCrdTorsMorph) + c_2(\Omega_\UnivCrdTorsMorph)) = 12 \lambda$.
  Similarly, also using Lemma \ref{lem:pushforward-dram}, we find
  \begin{equation*}
    c_1( \UnivCrdTorsMorph_\ast ( \Poincarebundle \otimes \mathscr{P} ) )
    = \lambda + \tfrac{1}{2}\mathfrak{a} - \tfrac{1}{2}\mathfrak{b} - \tfrac{1}{4}\dram + \mathfrak{d}.
  \end{equation*}
  Using the results of \autocite{F09}, in particular Lemmata 2.6 and 2.13 as well as Proposition 2.12,
  we can calculate the pushforwards of $\mathfrak{a}$, $\mathfrak{b}$ and $\mathfrak{c}$ by $\UnivGrdTorsMorph$:
  \begin{align*}
    \UnivGrdTorsMorph_\ast (\mathfrak{a}) & = -146784 \lambda + 20856 \dprimeboundary + 41712 \dram,\\
    \UnivGrdTorsMorph_\ast (\mathfrak{b}) & = 4224 + 264 \dprimeboundary + 528 \dram,\\
    \UnivGrdTorsMorph_\ast (\mathfrak{c}) & = -48279 + 6930 \dprimeboundary + 13860 \dram,
  \end{align*}
  and of course $\UnivGrdTorsMorph_\ast( \lambda ) = N\lambda$, $\UnivGrdTorsMorph_\ast(\dram) = N\dram$,
  where $N = 6006$ is the degree of $\UnivGrdTorsMorph$.
  Putting everything together, we obtain the result.
  The difference between $[\MyDivisorBar]^{\mathrm{virt}}$ and $[\MyDivisorBar]$ arises
  from the boundary components where $\phi$ is degenerate.  
\end{proof}
\begin{dummyversion}
  \begin{remark}
    A more detailed calculation:
    \begin{align*}
      Z_1(\degenmorph)
      & = \UnivGrdTorsMorph_\ast ( \lambda  + 2 \mathfrak{a} - \mathfrak{b} ) - 6 \UnivGrdTorsMorph_\ast (\mathfrak{c})
        - 3 \UnivGrdTorsMorph_\ast \left( \lambda + \frac{\mathfrak{a}}{2}
        - \frac{\mathfrak{b}}{2} - \frac{\dram}{4} + \mathfrak{d}\right)\\
      & = -2N \lambda + \frac{3}{4}N \dram
        + \UnivGrdTorsMorph_\ast \left( \frac{\mathfrak{a}}{2} + \frac{\mathfrak{b}}{2}
        - 6 \mathfrak{c} - 3 \mathfrak{d}\right)\\
      & = 206382 \lambda - 31020 (\delta_0' + \alpha \delta_0'') - \frac{115071}{2} \dram\qedhere
    \end{align*}
  \end{remark}
\end{dummyversion}
\begin{theorem}
  $\RgBar{15}$ is of general type.
\end{theorem}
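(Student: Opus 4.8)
The plan is to upgrade the class computation of the preceding Proposition to a bigness statement for $K_{\RgBar{15}}$. First I would reduce to bigness using the extension result of \autocite{FL} recalled above: since for any desingularization $\RgBarDesing{15} \to \RgBar{15}$ one has $H^0(\RgBarDesing{15}, K^{\otimes l}_{\RgBarDesing{15}}) \cong H^0(\RgBarReg{15}, K^{\otimes l}_{\RgBar{15}})$ for all $l$, it suffices to show that the $\QQ$-divisor class $K_{\RgBar{15}}$ is big. Combining the formula
\begin{equation*}
  K_{\RgBar{15}} = 13\lambda - 2\dprimeboundary - 3\dram - 2\sum_{i=1}^{7}(\delta_i + \delta_{15-i} + \delta_{i:15-i}) - (\delta_1 + \delta_{14} + \delta_{1:14})
\end{equation*}
with Remark \ref{rem:coefficients-bounded} (Proposition 1.9 of \autocite{FL}, valid since $15 \leq 23$), this in turn reduces to producing a single effective $\QQ$-divisor class $D \equiv a\lambda - b_0'\delta_0' - b_0''\delta_0'' - b_0^{\mathrm{ram}}\dram - (\text{higher boundary})$ with
\begin{equation*}
  \frac{a}{b_0'} < \frac{13}{2}, \qquad \frac{a}{b_0''} < \frac{13}{2}, \qquad \frac{a}{b_0^{\mathrm{ram}}} < \frac{13}{3}.
\end{equation*}

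Granting such a $D$, the conclusion is immediate: with $c = \max\{2/b_0', 2/b_0'', 3/b_0^{\mathrm{ram}}\}$ the three inequalities are exactly equivalent to $c < 13/a$, and then $K_{\RgBar{15}} - cD$ has strictly positive $\lambda$-coefficient and — by Remark \ref{rem:coefficients-bounded} — nonnegative coefficients on all boundary divisors. Thus $K_{\RgBar{15}}$ is the sum of $cD$, an effective boundary class, and a positive multiple of the big and nef Hodge class $\lambda$, hence big.

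For $D$ I would take a positive combination $D = s\,C_0 + t\,\overline{\mathcal{D}}_{15:2}$ with $s, t > 0$, where $C_0 = \tfrac{3127}{470}\lambda - \dprimeboundary - \tfrac{3487}{1880}\dram$ and $\overline{\mathcal{D}}_{15:2}$ is the Prym divisor of \autocite{FL}. The class $C_0$ is effective by the preceding Proposition, being $\tfrac{1}{31020}$ times the sum of a positive multiple of $[\MyDivisorBar]$, the effective boundary class $[\MyDivisorBar]^{\virt} - n[\MyDivisorBar]$, and $3\UnivGrdTorsMorph_\ast(\mathfrak{d})$, the last of which is effective since $\mathfrak{d}$ is the first Chern class of a sheaf supported on $\Delta_0''$; and $\overline{\mathcal{D}}_{15:2}$ is effective with class computed in \autocite{FL}. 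The essential observation is that $C_0$ alone does \emph{not} satisfy the criterion: its slope against $\delta_0'$ and $\delta_0''$ is $3127/470 \approx 6.65$, just above $13/2$, whereas its slope against $\dram$ is $12508/3487 \approx 3.59$, comfortably below $13/3$. I would therefore tune the ratio $t/s$ so that the mediant slopes of $D$ drop below $13/2$ in the $\delta_0'$- and $\delta_0''$-directions — which is possible precisely because $\overline{\mathcal{D}}_{15:2}$ has $\lambda/\delta_0$-slope below $13/2$ — while remaining below $13/3$ against $\dram$, and check with the explicit \autocite{FL} coefficients that such a ratio exists.

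The hard part is exactly this final check. The construction of $\MyDivisorBar$, the proof of its divisoriality in Sections \ref{subsec:curve-example} and \ref{subsec:grd-irreducible}, and the computation of its class are all already done, so what is left is a purely numerical verification. But it is the decisive one: the divisors of \autocite{FL} alone fail the slope inequalities for $g = 15$ — which is why that genus was left open — and the theorem amounts to the assertion that $\MyDivisorBar$ furnishes the missing slope in the $\delta_0'$- and $\delta_0''$-directions without harming the $\dram$-inequality. Optimizing over $s$ and $t$ and confirming that all three inequalities hold simultaneously is the one new computation that finishes the proof.
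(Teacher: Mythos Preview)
Your proposal is correct and follows essentially the same approach as the paper: both take a positive linear combination of the effective class $31020\,C_0 = [\MyDivisorBar]^{\virt} + 3\UnivGrdTorsMorph_\ast(\mathfrak{d})$ with the Farkas--Ludwig divisor $\overline{\mathcal{D}}_{15:2}$ and verify the three slope inequalities against $\delta_0'$, $\delta_0''$, $\dram$, invoking Proposition 1.9 of \autocite{FL} for the remaining boundary. The paper simply carries out the numerical check you defer, exhibiting explicit coefficients $\beta = 667/680394$, $\gamma = 4/113399$ for which $\beta\,\overline{\mathcal{D}}_{15:2} + \gamma\cdot 31020\,C_0 = \epsilon\lambda - 2\dprimeboundary - 3\dram$ with $\epsilon = 10288/793 < 13$.
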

\begin{proof}
  The contribution of $\sigma_\ast (\mathfrak{d})$ to $[\MyDivisorBar]$
  only improves the ratio between the coefficients of $\lambda$ and the boundary components.
  The same goes for the boundary components where $\phi$ is degenerate.
  Hence we may as well work with the class
  $[\MyDivisorBar]^{\mathrm{virt}} + 3\UnivGrdTorsMorph_\ast (\mathfrak{d})$.
  Then we take an appropriate linear combination of $\MyDivisorBar$
  and the divisor $\overline{\mathcal{D}}_{15:2}$
  from \cite{FL} having class
  \begin{align*}
    [\overline{\mathcal{D}}_{15:2}] & = 5808 \lambda - 924 \dprimeboundary - 990 \dram \\
    & = 924 \left( \tfrac{44}{7} \lambda - \dprimeboundary - \tfrac{15}{14} \dram  \right).
  \end{align*}
  For instance we have
  \begin{equation*}
    \beta \overline{\mathcal{D}}_{15:2} + \gamma \MyDivisorBar = \epsilon \lambda - 2 \dprimeboundary - 3 \dram,
  \end{equation*}
  where
  \begin{equation*}
    \beta = \frac{667}{680394}, \quad \gamma = \frac{4}{113399}, \quad \epsilon = \frac{10288}{793}.
  \end{equation*}
  Here $\epsilon < 13$, hence the canonical class is big.
\end{proof}
\begin{remark}
  The map
  \begin{equation*}
    \Sym^2 H^0(C, L \otimes \eta) \rightarrow H^0(C, L^{\otimes 2})/\Sym^2 H^0(C, L)
  \end{equation*}
  is identically zero along the boundary component $\Delta_0''$.  Hence the morphism
  $\degenmorph$ is degenerate with order $3$ along $\Delta_0''$.  It follows that we can
  subtract $3 \delta_0''$ from $Z_1(\degenmorph)$ and still obtain an effective class.
\end{remark}
\begin{dummyversion}
  \begin{proof}
    As before, we have the exact sequences
    \begin{equation*}
      0 \rightarrow L \rightarrow \nu^\ast \nu_\ast L \rightarrow L|_x \rightarrow 0
    \end{equation*}
    and
    \begin{equation*}
      0 \rightarrow L\otimes \eta \rightarrow \nu^\ast \nu_\ast L \rightarrow L\otimes \eta|_x \rightarrow 0
    \end{equation*}
    Hence we have the identification $H^0(C, L) \cong H^0(\normaliz{C}, \nu^\ast L)$
    and the inclusion $H^0(C, L \otimes \eta) \hookrightarrow H^0(\normaliz{C}, \nu^\ast L)$.
    It follows that we have an inclusion $|L \otimes \eta| \hookrightarrow |L|$ on the level of Weil divisors,
    which implies the statement.
  \end{proof}
\end{dummyversion}

\printbibliography

\Addresses

\end{document}